\documentclass[11pt,reqno]{amsart}
\usepackage{amssymb,latexsym,enumerate}
\usepackage{graphicx,subfig}
\usepackage{epsfig}
\usepackage{color}

\usepackage{lineno}
\newcommand*\patchAmsMathEnvironmentForLineno[1]{%
  \expandafter\let\csname old#1\expandafter\endcsname\csname #1\endcsname
  \expandafter\let\csname oldend#1\expandafter\endcsname\csname end#1\endcsname
  \renewenvironment{#1}%
     {\linenomath\csname old#1\endcsname}%
     {\csname oldend#1\endcsname\endlinenomath}}%
\newcommand*\patchBothAmsMathEnvironmentsForLineno[1]{%
  \patchAmsMathEnvironmentForLineno{#1}%
  \patchAmsMathEnvironmentForLineno{#1*}}%
\AtBeginDocument{%
\patchBothAmsMathEnvironmentsForLineno{equation}%
\patchBothAmsMathEnvironmentsForLineno{align}%
\patchBothAmsMathEnvironmentsForLineno{flalign}%
\patchBothAmsMathEnvironmentsForLineno{alignat}%
\patchBothAmsMathEnvironmentsForLineno{gather}%
\patchBothAmsMathEnvironmentsForLineno{multline}%
}

\newtheorem{thm}{Theorem}[section]
\newtheorem{dfn}[thm]{Definition}
\newtheorem{lma}[thm]{Lemma}
\newtheorem{cor}[thm]{Corollary}
\newtheorem{prp}[thm]{Proposition}
\newtheorem{exm}[thm]{Example}
\newtheorem{clm}[thm]{Claim}

\newtheorem{conj}[thm]{Conjecture}
\newtheorem{fact}[thm]{Fact}

\DeclareMathOperator{\ex}{ex}

\title{${\ell}$-degree Tur\'{a}n density}

\author{Allan Lo}
\address{School of Mathematics\\ University of Birmingham\\Birmingham\\ B15 2TT\\ UK}
\thanks {The research leading to these results was partially supported by the  European Research Council
under the ERC Grant Agreement no. 258345 (A. Lo).}
\email{s.a.lo@bham.ac.uk}

\author{Klas Markstr{\"o}m}
\address{Department of Mathematics and Mathematical Statistics\\ Ume\r{a} University\\ S-901 87 Ume\r{a}\\ Sweden}
\email{klas.markstrom@math.umu.se}

\date{\today}
\keywords{extremal problem, hypergraph, Tur\'{a}n density, jump constant, $\ell$-degree}
\subjclass[2010]{05C35}

\begin{document}

\numberwithin{equation}{section}


\begin{abstract}
Let $H_n$ be a $k$-graph on $n$ vertices. 
For $0 \le \ell <k$ and an $\ell$-subset $T$ of $V(H_n)$, define the degree $\deg(T)$ of $T$ to be the number of $(k-\ell)$-subsets~$S$ such that $S \cup T$ is an edge in~$H_n$.
Let the minimum $\ell$-degree of $H_n$ be $\delta_{\ell}(H_n) = \min \{ \deg(T) : T \subseteq V(H_n)$ and $|T|=\ell\}$.
Given a family $\mathcal{F}$ of $k$-graphs, the $\ell$-degree Tur\'an number $\ex_{\ell}(n, \mathcal{F})$ is the largest $\delta_{\ell}(H_n)$ over all $\mathcal{F}$-free $k$-graphs $H_n$ on $n$ vertices. 
Hence, $\ex_0(n, \mathcal{F})$ is the Tur\'an number.
We define $\ell$-degree Tur\'an density to be $$\pi^k_{\ell}(\mathcal{F}) = \limsup_{n \rightarrow \infty} \frac{\ex_{\ell}(n, \mathcal{F} )}{ \binom{n- \ell}{k}}.$$
In this paper, we show that for $k> \ell >1$, the set of $\pi_{\ell}^k(\mathcal{F})$ is dense in the interval $[0,1)$.
Hence, there is no ``jump'' for $\ell$-degree Tur\'an density when $k>\ell >1$.
We also give a lower bound on $\pi_{\ell}^k(\mathcal{F})$ in terms of an ordinary Tur\'an density.
\end{abstract}

\maketitle

\section{Introduction}

A \emph{$k$-uniform hypergraph}, or a \emph{$k$-graph} for short, is a pair $H = (V(H),E(H))$, where $V(H)$ is a finite set of vertices and the edge set $E(H)$ is a set of $k$-subsets of $V(H)$. 
The notation $H_n$ indicates that $|V(H_n)| = n$.

Given a family $\mathcal{F}$ of $k$-graphs, a $k$-graph $H_n$ is \emph{$\mathcal{F}$-free} if it contains no copy of any member of $\mathcal{F}$.
The \emph{Tur\'{a}n number $\ex(n,\mathcal{F})$} is the maximum number of edges in $H_n$ for all $\mathcal{F}$-free $k$-graphs $H_n$ (with $n$ vertices).
Katona, Nemetz and Simonovits~\cite{MR0172263} showed that the \emph{Tur\'{a}n density} $\pi(\mathcal{F}) = \lim_{n \rightarrow \infty} \ex(n,\mathcal{F})/ \binom{n}{k}$ exists.
The celebrated Erd\H{o}s--Simonovits--Stone theorem~\cite{MR0205876,MR0018807} implies that for $k=2$ and any family $\mathcal{F}$ of $2$-graphs, $\pi ( \mathcal{F} ) = \min \left\{ 1 - \frac{1}{\chi (F) -1} : F \in \mathcal{F} \right\}$, where $\chi(G)$ is the chromatic number of~$G$.
For $k \ge 3$, determining $\pi(\mathcal{F})$ is known to be hard.

Recall that the Tur\'{a}n number $\ex(n,\mathcal{F})$ studies the maximum number of edges in an $\mathcal{F}$-free $k$-graph of order~$n$. 
One natural variant is the so-called Dirac-type condition, which studies the largest possible minimum degree for $\mathcal{F}$-free $k$-graphs. 
For a $k$-graph $H$ and an $\ell$-subset~$T$ of~$V(H)$, let the \emph{neighbourhood of $T$} be $N^H(T) = \{S \in \binom{ V (H) }{k-\ell} : T \cup S \in E(H)\}$ and let the \emph{degree of $T$} be $\deg^H(T) = |N^H(T)|$.
(Here, $\binom{V}{k}$ is the set of $k$-subsets of $V$.)
The \emph{minimum $\ell$-degree $\delta_{\ell}(H)$} is $\min\{ \deg^H(T) : T \in \binom{V(H)}{ \ell }\} $.
Hence, if $\ell \ge k$, then $\delta_{\ell}(H) = 0$ unless $H$ is complete and $k= \ell$.
Recently, there has been an increasing interest in Dirac-type conditions for perfect matchings and Hamiltonian cycles. 
We recommend \cite{rodldirac} for a survey on perfect matchings and Hamiltonian cycles in $k$-graphs.

Note that $\delta_0(H) = |E(H)|$.
Thus, $\ex(n,\mathcal{F})$ is the maximum $\delta_0(H_n)$ over all $\mathcal{F}$-free $k$-graphs $H_n$.
In this paper, we generalize $\ex(n,\mathcal{F})$ for the minimum $\ell$-degree.
Define \emph{$\ell$-degree Tur\'an number $\ex_{\ell}(n,\mathcal{F})$} to be the maximum $\delta_{\ell}(H_n)$ over all $\mathcal{F}$-free $k$-graphs $H_n$.
We further define the \emph{$\ell$-degree Tur\'an density of $\mathcal{F}$} to be
\begin{align}
\pi_{\ell}(\mathcal{F}) = \limsup_{n \rightarrow \infty} \frac{\ex_{\ell}(n,\mathcal{F})}{\binom{n}{k-\ell}}. \nonumber
\end{align}
As mentioned by Mubayi and Zhao~\cite{MR2337241} for the case $\ell = k-1$, one should divide by $\binom{n-k}{k-\ell}$ instead of $\binom{n}{k-\ell}$ in the definition above.
Since $\ell$ and $k$ are fixed and $n$ tends to infinity, this does not affect the value of $\pi_{\ell}(\mathcal{F})$. 

Note that for $ \ell = 0$, we have $\ex(n,\mathcal{F}) = \ex_0(n,\mathcal{F})$ and $\pi_0(\mathcal{F}) = \pi(\mathcal{F})$.
For $ \ell =1$, it is not difficult to deduce that the limit $\pi_1(\mathcal{F})  = \lim_{n \rightarrow \infty} \ex_1(n,\mathcal{F})/\binom{n}{k-1} = \pi(\mathcal{F})$.
The exact value of $\pi ( \mathcal{F} )$ is determined only for a few families $\mathcal{F}$ of $k$-graphs.
We recommend \cite{MR1341481, keevash2011hypergraph} for surveys on hypergraph Tur\'an problems.

By a simple averaging argument, we know that for $0 \le \ell \le \ell' < k$ and $k$-graphs $H_n$, if $\delta_{\ell'}(H_n) \ge \alpha \binom{n}{k-\ell'}$ then $\delta_{\ell}(H_n) \ge (\alpha-o(1)) \binom{n}{k-\ell}$.
Thus, $\pi_{ \ell }(\mathcal{F}) \ge \pi_{\ell'}(\mathcal{F}) $ for $0 \le \ell \le \ell'<k$.
Recall that for a $k$-graph $H$ and an $\ell$-subset~$T$ of~$V(H)$, $N^H(T) = \{S \in \binom{V(H)}{k-\ell} : T \cup S \in E(H)\}$.
Thus, $N^H(T)$ can be viewed as a $(k-\ell)$-graph.
Often $N^H(T)$ is called the \emph{link-graph of}~$T$.
Define the \emph{$\ell$-link family $\mathcal{ L }_{\ell}(H)$ of $H$} to be the family of the $N^H(T)$ for all $\ell$-subsets~$T$ of~$V(H)$.
For example, $\mathcal{ L }_{\ell}(K_t^k) = \left\{ K_{ t - \ell }^{k-\ell} \right\}$, where $K_t^k$ is the complete $k$-graphs on $t$ vertices.
Also, $\mathcal{ L }_1(K_4^3-e) = \{ K_{3} , P_3 \}$, where $K_4^3-e$ is the unique 3-graph on 4 vertices with 3 edges and $P_3$ is the path of length~2.
For a family $\mathcal{F}$ of $k$-graphs, $\mathcal{ L }_{\ell}(\mathcal{F})$ is simply the union of $\mathcal{ L }_{\ell}(F)$ for all $F \in \mathcal{F}$.
Using a probabilistic construction, we show that $\pi_{\ell}( \mathcal{F} ) \ge \pi( \mathcal{ L }_{ \ell - 1 }(\mathcal{F} ) )$.
\begin{prp} \label{prp:lowerbound}
Let $2 \le \ell < k $ be integers and let $\mathcal{F}$ be a family of $k$-graphs.
Then $\pi_{ \ell }(\mathcal{F}) \ge \pi(\mathcal{ L }_{ \ell - 1 }(\mathcal{F}))$.
\end{prp}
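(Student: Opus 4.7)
Fix $\epsilon > 0$ and write $\pi := \pi(\mathcal{L}_{l-1}(\mathcal{F}))$. The plan is to construct, for arbitrarily large $N$, an $\mathcal{F}$-free $k$-graph $H$ on $N$ vertices with $\delta_l(H) \ge (\pi - \epsilon)\binom{N-l}{k-l}$, from which the inequality follows.

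First, I invoke the identity $\pi_0(\mathcal{G}) = \pi_1(\mathcal{G})$ noted in the introduction, applied to the family $\mathcal{G} := \mathcal{L}_{l-1}(\mathcal{F})$ of $(k-l+1)$-graphs. This yields, for every sufficiently large $n$, an $\mathcal{L}_{l-1}(\mathcal{F})$-free $(k-l+1)$-graph $G$ on $n$ vertices with $\delta_1(G) \ge (\pi - \epsilon/2)\binom{n-1}{k-l}$.

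Second, I lift $G$ to $H$ by a random blow-up. Fix $N = tn$ for $t$ large, take a uniformly random balanced partition $[N] = V_1 \cup \cdots \cup V_n$ with $|V_i| = t$, and include in $E(H)$ those $k$-sets whose vertices lie in part-indices forming (in a prescribed way) an edge of $G$. The essential structural feature is that for every $(l-1)$-set $T \subseteq V(H)$, the link $N^H(T)$ is a subgraph of the natural blow-up of $G$ to $[N] \setminus T$. Since blow-ups preserve $\mathcal{L}_{l-1}(\mathcal{F})$-freeness, any copy of $F \in \mathcal{F}$ embedded in $H$ would, via some $(l-1)$-set $T \subset V(F)$, produce a copy of $N^F(T) \in \mathcal{L}_{l-1}(F) \subseteq \mathcal{L}_{l-1}(\mathcal{F})$ inside $G$, contradicting our choice of $G$; hence $H$ is $\mathcal{F}$-free.

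The main obstacle is establishing the lower bound on $\delta_l(H)$. For an $l$-set $W$ whose vertices lie in $r \le k-l+1$ distinct parts with indices $I_W \subseteq [n]$, the count of extensions is roughly $|\{e \in E(G) : e \supseteq I_W\}| \cdot t^{k-l}$, which depends on the $r$-codegree of $I_W$ in $G$; for $r \ge 2$ this codegree can be zero for worst-case $W$, since only $\delta_1(G)$ is controlled by the hypothesis. Moreover, when $l > (k+1)/2$, an $l$-set may already span more than $k-l+1$ parts and thus have no extensions at all under the naive rule. To surmount both issues I would enrich the construction: allow several types of class profiles (e.g.\ edges of $H$ supported on fewer than $k-l+1$ parts, or indexed by several different $G$-edges simultaneously) and take a union over independently randomised partitions so that, by averaging, each $l$-set sees the typical rather than the minimum codegree. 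A Chernoff argument over the random partitions then upgrades the expected degree bound to a uniform one, yielding a deterministic $H$ with $\delta_l(H) \ge (\pi - \epsilon)\binom{N-l}{k-l}$; verifying that this enriched rule still keeps every $(l-1)$-link inside a copy of $G$ is the technical heart of the argument.
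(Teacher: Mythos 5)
Your write-up stalls at exactly the point you yourself call ``the technical heart'', and that gap is genuine. You correctly diagnose why the naive blow-up of a min-degree-extremal $G$ fails (an $l$-set spread over several parts has degree governed by codegrees of $G$, which the hypothesis does not control, and for $l>(k+1)/2$ it may have no extensions at all), but the proposed repair --- an ``enriched'' edge rule together with a union over several independently randomised partitions --- is never specified, and as described it undermines your own freeness argument. $\mathcal{F}$-freeness is not preserved under unions: a copy of $F$ may use edges arising from different partitions, and then no $(l-1)$-link of the copy embeds into a blow-up of a single $G$; the homomorphism you invoke exists only edge-by-edge within one partition structure. So the averaging/Chernoff step would at best give every $l$-set large expected degree in a graph whose $\mathcal{F}$-freeness is unestablished, and the construction achieving both $\delta_l(H)\ge(\pi-\epsilon)\binom{N-l}{k-l}$ and $\mathcal{F}$-freeness is missing. (A secondary imprecision, shared to some extent by the paper: ``blow-ups preserve $\mathcal{L}_{l-1}(\mathcal{F})$-freeness'' is false as stated, since a blow-up of $G$ contains $L$ as soon as $L$ admits a homomorphism into $G$; one must first replace $G$ by a graph free of homomorphic images of $\mathcal{L}_{l-1}(\mathcal{F})$, which costs nothing in density by supersaturation.)

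For comparison, the paper's construction dissolves precisely the tension you ran into by randomising per $l$-set rather than per vertex: every $l$-set $S\subseteq[n]$ receives an independent uniform label $X_S\in[m]$, where $G_m$ is an $\mathcal{L}_{l-1}(\mathcal{F})$-free $(k-l+1)$-graph of edge density at least $\pi-\epsilon$, and a $k$-set $\{i_1<\dots<i_k\}$ is an edge iff the labels of the $k-l+1$ sets $\{i_1,\dots,i_{l-1},i_{l-1+j}\}$, $j\in[k-l+1]$, form an edge of $G_m$. Then for \emph{every} $l$-set, irrespective of how it sits in the $k$-set, the probability that a given extension is an edge is essentially the edge density of $G_m$ --- so only the ordinary Tur\'an density of the link family is needed, with no minimum-degree or codegree control of $G_m$ (your first step via $\pi_1=\pi_0$ is thus unnecessary) --- while freeness holds because, inside any $f$-set, the labels define a homomorphism of the relevant $(l-1)$-link into $G_m$. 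Some device of this consistency-preserving kind, rather than a union of partition-based constructions, is what your plan still needs.
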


\begin{cor}
For $2 \le \ell < k \le t$, $\pi_{ \ell }(K_t^k) \ge \pi_{1}(K_{t- \ell +1}^{k-\ell+1})$.
In particular, $\pi_{k-1}(K_t^k) \ge (t-k)/(t-k+1)$ and $\pi_{k-1}(K_{k+1}^k) \ge 1/2$.
\end{cor}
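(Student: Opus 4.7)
The plan is to derive both parts of the corollary as direct consequences of Proposition~\ref{prp:lowerbound} together with two facts already mentioned in the introduction: that $\pi_1(\mathcal{G})=\pi(\mathcal{G})$ for every family $\mathcal{G}$ of $k$-graphs, and that for ordinary $2$-graphs the Erd\H{o}s--Stone--Simonovits theorem gives $\pi(K_r)=1-1/(r-1)$. No new construction should be needed.

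First I would verify the link computation for complete $k$-graphs. For every $(l-1)$-set $T\subseteq V(K_t^k)$, the neighbourhood $N^{K_t^k}(T)$ is the complete $(k-l+1)$-graph on the remaining $t-l+1$ vertices, so
\begin{align*}
\mathcal{L}_{l-1}(K_t^k)=\bigl\{K_{t-l+1}^{k-l+1}\bigr\}.
\end{align*}
Applying Proposition~\ref{prp:lowerbound} with $\mathcal{F}=\{K_t^k\}$ therefore yields
\begin{align*}
\pi_l(K_t^k)\ \ge\ \pi\bigl(K_{t-l+1}^{k-l+1}\bigr)\ =\ \pi_1\bigl(K_{t-l+1}^{k-l+1}\bigr),
\end{align*}
where the last equality is the $l=1$ identity $\pi_1(\mathcal{G})=\pi(\mathcal{G})$ recorded in the introduction. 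This settles the main inequality.

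For the ``in particular'' statement I would specialise $l=k-1$. Then $k-l+1=2$ and $t-l+1=t-k+2$, so the previous inequality becomes
\begin{align*}
\pi_{k-1}(K_t^k)\ \ge\ \pi\bigl(K_{t-k+2}^{2}\bigr).
\end{align*}
Since $K_{t-k+2}^{2}$ is an ordinary complete graph on $t-k+2$ vertices with chromatic number $t-k+2$, the Erd\H{o}s--Stone--Simonovits theorem gives $\pi(K_{t-k+2}^{2})=1-1/(t-k+1)=(t-k)/(t-k+1)$, yielding the first bound. Plugging $t=k+1$ into this expression gives $(t-k)/(t-k+1)=1/2$, which is the last bound.

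There is essentially no hard step here: the whole argument is a bookkeeping exercise once Proposition~\ref{prp:lowerbound} is in hand. The only thing to watch is that the index shift in the links is correct (we use $\mathcal{L}_{l-1}$, not $\mathcal{L}_l$, which is why $k-l+1$ and $t-l+1$ appear rather than $k-l$ and $t-l$), and that one is entitled to invoke the classical $2$-graph density $\pi(K_r)=1-1/(r-1)$ in the final reduction.
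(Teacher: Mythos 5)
Your derivation is correct and is exactly the route the paper intends: the corollary is stated as an immediate consequence of Proposition~\ref{prp:lowerbound} together with the link computation $\mathcal{L}_{l-1}(K_t^k)=\{K_{t-l+1}^{k-l+1}\}$ and the identity $\pi_1=\pi$ from the introduction, with Erd\H{o}s--Stone--Simonovits giving the numerical bounds for $l=k-1$. Your index bookkeeping ($\mathcal{L}_{l-1}$ rather than $\mathcal{L}_l$) matches the paper's.
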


Note that for $k=3$, the corollary above gives $\pi_2(K_4^3) \ge 1/2$.
In fact, Czygrinow and Nagle~\cite{MR1829685} conjectured that equality holds.
For $k = 4$, the corollary also implies that $\pi_3(K_5^4) \ge 1/2$.
Here, we also give a different construction to show that $\pi_3(K_5^4) \ge 1/2$, which is due to Giraud~\cite{MR1077144}. 

\begin{exm}
Let $M$ be an $m \times m$ $0/1$-matrix. 
Define the hypergraph $H(M)$ with $2m$ vertices corresponding to the rows and columns of $M$.
Any $4$-set consisting of exactly $3$ rows or $3$ columns is an edge.
Also any $4$-set of $2$ rows and $2$ columns inducing a $2 \times 2$ submatrix with odd sum is an edge.
\end{exm}

It is easy to verify that $H(M)$ is $K_5^4$-free and $\delta_3(H(M)) \ge (1/2 - o(1)) |V(H(M))|$.
When $M$ is the Hadamard matrix and replacing the $-1$'s by $0$'s, $H(M)$ was originally used to show that $\pi (K_5^4) \ge 11/16$.
Sidorenko~\cite{MR1341481} conjectured that $\pi (K_5^4) = 11/16$.
The best known upper bound is $\pi (K_5^4) \le \frac{1753}{2380} \approx 0.73655 \dots$ given by the second author~\cite{MR2548922}.
We believe that it would be very interesting to get an answer to the following conjecture.

\begin{conj}
For all $k \ge 2$, $\pi_{k-1}(K_{k+1}^k) = 1/2$.
\end{conj}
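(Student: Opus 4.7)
The lower bound $\pi_{k-1}(K_{k+1}^k) \ge 1/2$ is already established by the Corollary above (taking $l = k-1$, $t = k+1$, and using that $\pi_1(K_3) = 1/2$), so only the matching upper bound $\pi_{k-1}(K_{k+1}^k) \le 1/2$ needs to be proved. That is, one must show that every $K_{k+1}^k$-free $k$-graph $H_n$ satisfies $\delta_{k-1}(H_n) \le (1/2 + o(1))n$. The base case $k=2$ is just the codegree form of Mantel's theorem: if $G$ is triangle-free and $\delta(G) > n/2$, then any edge $uv$ has $|N(u)| + |N(v)| > n$, forcing a common neighbour and hence a triangle.

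For general $k$, the first strategy I would try is induction on $k$, passing to the link $L_v$ of a vertex $v$, which is a $(k-1)$-graph on $n-1$ vertices inheriting high $(k-2)$-codegree by an averaging argument. The immediate structural obstacle is that $L_v$ need not be $K_k^{k-1}$-free: a copy of $K_k^{k-1}$ in $L_v$ on vertex set $\{v_1,\ldots,v_k\}$ completes to a $K_{k+1}^k$ in $H$ only if $\{v_1,\ldots,v_k\}$ is itself an edge of $H$. One would therefore have to combine information across many links, perhaps by double-counting pairs (edge of $H$, $K_k^{k-1}$-clique in some link $L_v$) and arguing that the density of such cliques forces coincidence with an edge.

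A cleaner but strictly weaker direct approach is inclusion-exclusion on a fixed hypothetical edge $e = \{v_1, \ldots, v_k\}$: the $K_{k+1}^k$-free condition forces $\bigcap_{i=1}^k N(e \setminus \{v_i\}) \subseteq e$, while each term has size at least $(1/2 + \varepsilon)n$. This yields only $\pi_{k-1}(K_{k+1}^k) \le 1 - 1/k + O(\varepsilon)$, which is already far from $1/2$ once $k \ge 3$.

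The main obstacle is precisely this gap: even the case $k=3$ is the Czygrinow--Nagle conjecture, open for over two decades, with the best known upper bound (via flag algebras) still strictly above $1/2$. A successful proof would most plausibly need to pass through a stability theorem, first classifying those $K_{k+1}^k$-free $H_n$ with $\delta_{k-1}(H_n)$ close to $n/2$ as being close in edit distance to the Hadamard-type construction of the Example (or a $k$-analogue thereof), and then ruling out exact extremality by a perturbation or counting argument tailored to this near-extremal family. It is precisely this stability step that I would expect to be the principal difficulty.
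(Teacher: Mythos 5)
The statement you were given is presented in the paper as a conjecture, not a theorem: the paper contains no proof of $\pi_{k-1}(K_{k+1}^k)=1/2$. What the paper does establish is only the lower bound, exactly as you say --- via the corollary to Proposition~\ref{prp:lowerbound} with $l=k-1$, $t=k+1$ (reducing to $\pi_1(K_3)=\pi(K_3)=1/2$), and, for $k=4$, independently via Giraud's Hadamard-matrix construction $H(M)$. So there is no ``paper proof'' to compare yours against, and your proposal is correctly calibrated to that reality: you isolate the upper bound as the open part, you settle $k=2$ by the minimum-degree form of Mantel's theorem (where indeed $\pi_1(K_3)=\pi_0(K_3)=1/2$), and you correctly identify that already $k=3$ is the Czygrinow--Nagle conjecture, which remains open, with the best known codegree upper bound (obtained by flag-algebra methods) strictly above $1/2$.

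To be explicit about the gap: your text is an accurate survey of obstacles, not a proof, and you should not present it as one. The individual observations are sound --- the link of a vertex can contain copies of $K_k^{k-1}$ that do not sit on an edge of $H$, so naive induction on $k$ fails; and the inclusion--exclusion argument on a fixed edge $e$, using $\bigcap_{i} N(e\setminus\{v_i\})\subseteq e$, only yields $\pi_{k-1}(K_{k+1}^k)\le 1-1/k$, which is far from $1/2$ for $k\ge 3$. But no step in your proposal closes the gap between $1-1/k$ (or the flag-algebra bounds) and $1/2$, and the suggested stability route through the Hadamard-type extremal construction is a programme, not an argument. In short: lower bound correct and consistent with the paper, $k=2$ correct, upper bound for $k\ge 3$ unproved --- which is precisely why the paper states this as a conjecture.
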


Note that $(\ex ( n , \mathcal{F}) / \binom{n}{k})_{n \in \mathbb{N}}$ is a decreasing sequence for all families $\mathcal{F}$ of $k$-graphs.
Mubayi and Zhao~\cite{MR2337241} commented that they could not prove that $(\ex_{k-1}(n,\mathcal{F})/{n})_{n \in \mathbb{N}}$ (or $(\ex_{k-1}(n,\mathcal{F})/ (n-k+1) )_{n \in \mathbb{N}}$) is a decreasing sequence for $k \ge 3$.
Here, we give an explicit example to show that the sequence is not necessarily monotone.
With the aid of computer, we determined the exact values of $\ex_2 (n,K_4^3)$ for $n \le 11$. 
The exact values are, with the pairs being $(n,\ex_2 (n,K_4^3))$, $\{ (5,2) , (6,3), (7,4), (8,4), (9,5), (10,5), (11,6) \} $.
Thus, neither $(\ex_{2}(n,K_4^3)/{n})_{n \in \mathbb{N}}$ nor $(\ex_{2}(n,K_4^3)/(n-2))_{n \in \mathbb{N}}$ is a decreasing sequence.
Although $(\ex_{k-1}(n,\mathcal{F})/{n})_{n \in \mathbb{N}}$ is not monotone, Mubayi and Zhao~\cite{MR2337241} showed that this sequence does indeed converge to $\pi_{k-1} ( \mathcal{F} )$.
We generalize their result and show that $\pi_{\ell}(\mathcal{F}) = \lim_{n \rightarrow \infty} \ex_{\ell}(n, \mathcal{F}) / \binom{n}{k-\ell}$ for all $k > \ell \ge 0$ and all families $\mathcal{F}$ of $k$-graphs.
This observation has been made before (\cite{keevash2011hypergraph} page 118) but that we have not found a written proof of it.

\begin{prp} \label{prp:existence}
Given integers $k > \ell \ge 0$, $\pi_{\ell}(\mathcal{F}) = \lim_{n \rightarrow \infty} \ex_{\ell}(n, \mathcal{F}) / \binom{n}{k-\ell}$ for all families $\mathcal{F}$ of $k$-graphs.
\end{prp}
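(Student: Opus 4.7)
Since $\pi_l(\mathcal{F})$ is defined as a $\limsup$, it suffices to establish the matching $\liminf$: for every $\epsilon > 0$ and every sufficiently large $n$, we must exhibit an $\mathcal{F}$-free $k$-graph $H_n$ with $\delta_l(H_n) \ge (\pi_l(\mathcal{F}) - \epsilon)\binom{n}{k-l}$. Writing $c := \pi_l(\mathcal{F})$, the $\limsup$ hypothesis lets us pick $N$ arbitrarily large (in particular with $n \le N \le e^{c_0 n}$ for a constant $c_0 = c_0(k,l,\epsilon)$ determined by the concentration step below) together with an $\mathcal{F}$-free $H_N$ satisfying $\delta_l(H_N) \ge (c - \epsilon/3)\binom{N-l}{k-l}$.

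The key step is to pass from $H_N$ down to an $\mathcal{F}$-free $H_n$ by a random induced subgraph. Sample $V' \subseteq V(H_N)$ uniformly among $n$-subsets (equivalently, include each vertex independently with probability $p = n/N$ and condition on size). A direct pair-counting argument shows that for every $l$-set $T \subseteq V(H_N)$,
\begin{equation*}
E\bigl[\deg^{H_N[V']}(T) \mid T \subseteq V'\bigr] = \deg^{H_N}(T) \cdot \frac{\binom{n-l}{k-l}}{\binom{N-l}{k-l}} \ge (c - \epsilon/3)\binom{n-l}{k-l}.
\end{equation*}
Conditional on $T \subseteq V'$, this random variable equals $\sum_{S \in N^{H_N}(T)} \mathbb{1}_{S \subseteq V'\setminus T}$, a multilinear polynomial of degree $k-l$ in independent Bernoullis. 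Polynomial concentration (for instance Kim--Vu, or Janson's inequality applied to the lower tail) then yields the bound
\begin{equation*}
P\bigl[\deg^{H_N[V']}(T) < (c - \epsilon/2)\binom{n-l}{k-l} \mid T \subseteq V'\bigr] \le e^{-\Omega_{\epsilon,\, k-l}(n)}.
\end{equation*}

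Call $T$ \emph{bad} for $V'$ if $T \subseteq V'$ and the event above occurs. By linearity of expectation the expected number of bad $l$-sets inside $V'$ is at most $\sum_{T \in \binom{V(H_N)}{l}} P[T \subseteq V'] \cdot e^{-\Omega(n)} = \binom{n}{l} e^{-\Omega(n)} = o(1)$, so with positive probability none exists. Any such realization supplies an $\mathcal{F}$-free $k$-graph on $n$ vertices with $\delta_l \ge (c-\epsilon/2)\binom{n-l}{k-l} \ge (c-\epsilon)\binom{n}{k-l}$ for $n$ large, which is exactly what we need. The main technical obstacle is the concentration step: for $l = k-1$ it reduces to the hypergeometric tail bound used by Mubayi--Zhao~\cite{MR2337241}, but for $l \le k-2$ one genuinely needs a polynomial concentration inequality, and the permissible range of $N$ relative to $n$ is dictated by the logarithmic error factor in the Kim--Vu bound; one must therefore check that the $\limsup$ condition really does supply an $N$ in the required window $[n, e^{c_0 n}]$, which follows since the set of $N$ with $\delta_l(H_N) \ge (c - \epsilon/3)\binom{N}{k-l}$ is infinite.
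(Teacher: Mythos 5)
Your overall strategy is essentially the paper's: transfer the minimum $l$-degree density from a large $\mathcal{F}$-free $H_N$ to a random induced subgraph on $n$ vertices, union-bounding over the at most $\binom{n}{l}$ $l$-sets inside the sample (this is Lemma~\ref{lma:subgraph} and Corollary~\ref{cor:1}, which the paper proves with a vertex-exposure martingale and Azuma's inequality). However, as written your execution has a genuine gap at the point where you certify the range of $N$. Because you invoke Kim--Vu (or Janson), your tail bound carries an error factor depending on $N$, forcing the restriction $N \le e^{c_0 n}$; so for \emph{every} large $n$ you must exhibit a good index $N \in [n, e^{c_0 n}]$ with $\delta_l(H_N) \ge (c-\epsilon/3)\binom{N-l}{k-l}$. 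The $\limsup$ hypothesis only says the set of good $N$ is infinite, and an infinite set can be arbitrarily sparse (tower-type gaps, say), in which case infinitely many windows $[n, e^{c_0 n}]$ contain no good $N$ and your argument gives no lower bound on $\ex_l(n,\mathcal{F})/\binom{n}{k-l}$ for those $n$. ``Infinite'' does not imply ``meets every window'', so your closing sentence is a non sequitur. A secondary problem: after conditioning on $T \subseteq V'$ and on $|V'| = n$, the indicators of the events $S \subseteq V'\setminus T$ are functions of a uniform $(n-l)$-subset chosen \emph{without replacement}, not of independent Bernoulli variables, so Kim--Vu/Janson do not apply off the shelf; transferring from the independent model to the fixed-size model costs further $N$-dependent factors that you have not controlled.

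Both issues disappear if you use the concentration device the paper uses: expose the $n-l$ sampled vertices one at a time and apply Azuma's inequality to the Doob martingale of $\deg^{H_N[V']}(T)$, whose differences are at most $\binom{n}{k-l-1}$; since the deviation you need is $\tfrac{\epsilon}{2}\binom{n}{k-l} = \Theta_{\epsilon,k,l}(n)\binom{n}{k-l-1}$, this gives a tail of the form $\exp\bigl(-\Omega_{\epsilon,k,l}(n)\bigr)$ that is completely independent of $N$ and works for every $1 \le l < k$ (so your claim that $l\le k-2$ ``genuinely needs'' polynomial concentration is not correct, and no hypergeometric special-casing is needed for $l=k-1$ either). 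With that bound any good $N \ge n$ serves, infinitude of good $N$ suffices, and your argument becomes the paper's: for all $n \ge m \ge M(\epsilon)$ one gets $\ex_l(m,\mathcal{F})/\binom{m}{k-l} \ge \ex_l(n,\mathcal{F})/\binom{n}{k-l} - \epsilon$, whence the $\liminf$ matches the $\limsup$ and the limit exists.
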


It is not surprising that for a $k$-graph $F$, $\pi_{\ell}( F)$ also satisfies the so-called ``supersaturation'' phenomenon discovered by Erd\H{o}s and Simonovits~\cite{MR726456}.
Informally speaking, given a $k$-graph $F$ of order~$f$ and $\varepsilon >0$, there exist $\delta>0$ and $N$ such that every $k$-graph $H_n$ with $n \ge N$ and $\delta_{\ell}(H_n) \ge (\pi_{\ell}( F) + \varepsilon) \binom{n}{k-\ell}$ contains $\delta \binom{n}{f}$ copies of $F$.
By the supersaturation phenomenon, ``blowing-up $F$'' does not change the value of $\pi_{\ell}(F)$.
Given an integer~$s$ and a $k$-graph~$F$, the \emph{$s$-blow-up $F(s)$ of $F$} is the $k$-graph $(V', E')$ such that $V'$ is obtained by replacing each $v_i \in V(F)$ by a vertex set $V_i$ of size $s$, and $E' = \{ u_1 \dots u_k : u_j \in V_{i_j}, v_{i_1} \dots v_{i_k} \in E(F) \}$.
Given a family of $k$-graphs, let $\mathcal{F}(s) = \{ F(s) : F \in \mathcal{F} \}$.

\begin{prp}[Supersaturation] \label{prp:supersaturation}
Let $k > \ell \ge 1$ be integers.
Let $\mathcal{F}$ be a family of $k$-graphs.
For any $\varepsilon >0$, there exist $\delta = \delta ( \varepsilon, \mathcal{F} ) >0$ and $N$ such that every $k$-graph $H_n$ with $n >N$ and $\delta_{\ell}(H_n) > (\pi_{\ell}(\mathcal{F}) + \varepsilon ) \binom{n}{k-\ell}$ contains $\delta \binom{n}{f}$ copies of $F$ for some $F \in \mathcal{F}$ on~$f$ vertices.
Moreover, $\pi_{\ell}(\mathcal{F}) = \pi_{\ell}(\mathcal{F}(s))$ for all integers $s \ge 1$.
\end{prp}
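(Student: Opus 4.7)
The plan is to establish the supersaturation clause by a sampling-and-counting argument at a fixed auxiliary size $m$ chosen via Proposition~\ref{prp:existence}, and then to deduce the blow-up invariance from supersaturation together with Erd\H{o}s's classical theorem on dense $f$-partite $f$-uniform hypergraphs.

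For the supersaturation, I would first use Proposition~\ref{prp:existence} to fix $m = m(\epsilon)$ large enough that $\ex_l(m, F) < (\pi_l(F) + \epsilon/4) \binom{m-l}{k-l}$ and that $2\binom{m}{l} e^{-c_0 m} < 1/2$, where $c_0 = c_0(\epsilon, k)>0$ is the constant from the concentration step below. Take $n \gg m$ and let $H_n$ satisfy the hypothesis; since $\binom{n}{k-l}/\binom{n-l}{k-l} \to 1$, every $l$-set $T$ has $\deg^{H_n}(T) \ge (\pi_l(F) + \epsilon/2) \binom{n-l}{k-l}$. Sample a uniformly random $m$-subset $U \subseteq V(H_n)$. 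For a fixed $l$-set $T$, conditional on $T \subseteq U$, the variable $\deg^{H_n[U]}(T)$ has expectation $\deg^{H_n}(T) \cdot \binom{m-l}{k-l}/\binom{n-l}{k-l} \ge (\pi_l(F) + \epsilon/2) \binom{m-l}{k-l}$. Exposing the vertices of $U \setminus T$ one at a time via a uniform random permutation of $V(H_n) \setminus T$ gives a Doob martingale with $m - l$ steps and bounded differences of order $O(m^{k-l-1})$, so Azuma's inequality yields
\[
\Pr\!\left[\deg^{H_n[U]}(T) \le (\pi_l(F) + \epsilon/4) \binom{m-l}{k-l} \,\Big|\, T \subseteq U \right] \le 2 e^{-c_0 m}.
\]
Summing over $T \in \binom{V(H_n)}{l}$ and using $\Pr[T \subseteq U] = \binom{m}{l}/\binom{n}{l}$, the probability that $U$ fails $\delta_l(H_n[U]) > \ex_l(m, F)$ is at most $2 \binom{m}{l} e^{-c_0 m} < 1/2$. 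Hence at least $\binom{n}{m}/2$ of the $m$-subsets $U$ satisfy $\delta_l(H_n[U]) > \ex_l(m, F)$ and therefore contain a copy of $F$. A standard double count of pairs $(F', U)$ with $F' \subseteq H_n[U]$ a copy of $F$ then produces at least $\binom{n}{m}/(2 \binom{n-f}{m-f}) = \binom{n}{f}/(2 \binom{m}{f})$ copies of $F$ in $H_n$, giving the claim with $\delta = 1/(2 \binom{m}{f})$.

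For the ``moreover'' part, $\pi_l(F) \le \pi_l(F(s))$ is immediate from $F \subseteq F(s)$. For the opposite inequality, the supersaturation just proved produces $\Omega(n^f)$ labelled embeddings of $F$ in any $H_n$ with $\delta_l(H_n) > (\pi_l(F) + \epsilon) \binom{n}{k-l}$. Encoding these embeddings as edges of an auxiliary $f$-uniform $f$-partite hypergraph on $f$ disjoint copies of $V(H_n)$ produces $\Omega(n^f)$ edges, so Erd\H{o}s's theorem on complete multipartite sub-hypergraphs yields a copy of $K^{(f)}_{s^*, \dots, s^*}$ for any fixed $s^*$. Taking $s^* = fs$ and greedily thinning the parts to pairwise disjoint sets of size $s$ recovers a copy of $F(s)$ inside $H_n$, so $\pi_l(F(s)) \le \pi_l(F)$.

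The main obstacle is the concentration step. A direct reverse-Markov estimate only shows that $\Pr[\deg^{H_n[U]}(T) \le (\pi_l(F) + \epsilon/4)\binom{m-l}{k-l} \mid T \subseteq U]$ is strictly less than $1$ but bounded away from $0$, which is too weak to support a union bound over the $\binom{n}{l}$ possible $l$-sets. The key technical input is the martingale tail bound above, exploiting the $O(m^{k-l-1})$-Lipschitz dependence of $\deg^{H_n[U]}(T)$ on the random choice of $U \setminus T$; only after replacing first moments by genuine concentration does the union bound succeed, and this is precisely the step that distinguishes the minimum-degree supersaturation (for $l \ge 1$) from the classical edge-density supersaturation (the $l = 0$ case), where a simple averaging argument already works.
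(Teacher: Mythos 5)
Your argument is correct. The first half is essentially the paper's own route: the paper isolates your concentration step as a separate lemma (Lemma~\ref{lma:subgraph}), proved exactly as you propose --- fix $m$ via Proposition~\ref{prp:existence}, run a vertex-exposure Doob martingale for $\deg^{H_n[U]}(T)$ with differences $O(m^{k-l-1})$, apply Azuma, union bound over the $l$-sets (picking up the factor $\binom{m}{l}$ just as you do), conclude that at least half of the $m$-sets $U$ satisfy $\delta_l(H_n[U])>\ex_l(m,F)$, and finish with the same double count; your $\delta = 1/(2\binom{m}{f})$ is the intended constant (the paper's ``$\delta=\binom{M}{f}/2$'' is a typo). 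Where you genuinely diverge is the blow-up step. The paper records \emph{unlabelled} copies of $F$ as edges of an auxiliary $f$-graph on $V(H_n)$, applies Erd\H{o}s's theorem to get $K^f_f(L)$, and then must run an extra Ramsey argument on the edge-colouring by permutations of $V(F)$ to make the embeddings consistent before a monochromatic $K^f_f(s)$ yields $F(s)$. You instead record \emph{labelled} embeddings as edges of an $f$-partite auxiliary hypergraph on $f$ disjoint copies of $V(H_n)$, so the role assignment is built into the partition; the multipartite form of Erd\H{o}s's theorem then gives $K^{(f)}_{s^*,\dots,s^*}$ with each part inside one copy, and your greedy thinning with $s^*=fs$ makes the parts disjoint in $V(H_n)$ (in fact completeness already forces disjointness, since every transversal must be an injective embedding, so the thinning is only a safety margin). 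The trade-off: the paper's version follows the classical Erd\H{o}s--Simonovits template but needs the additional Ramsey input, which it only sketches; yours needs the (standard) multipartite version of Erd\H{o}s's theorem and is otherwise more economical and self-contained.
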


\subsection{Jumps}

For $0\le a < b \le 1$, we denote by $[a,b)$ and $(a,b)$ the intervals $\{ c: a \le c < b\}$ and $\{ c: a < c < b\}$, respectively.
For $k > \ell \ge 0$, let
\begin{align*}
\Pi_{\ell}^k = \{ \pi_{\ell}( \mathcal{F} ): \textrm{ $\mathcal{F}$ is a family of $k$-graphs}\}.
\end{align*}
The Erd\H{o}s-Simonovits-Stone theorem implies that $\Pi_0^2 = \{0, 1/2, 2/3, \dots\}$.
The constant $\alpha \in [0,1)$ is \emph{a jump for $k$}, if there exists a constant $\delta > 0$ such that no family $\mathcal{F}$ of $k$-graphs satisfies $\pi ( \mathcal{F} ) \in (\alpha, \alpha + \delta )$.
Hence, every $\alpha \in [0,1)$ is a jump for~$k=2$.
For $k \ge 3$, every $\alpha \in [0,k!/k^k)$ is a jump for $k$ by a result of Erd\H{o}s~\cite{MR0183654}.
However, Frankl and R\"odl~\cite{MR771722} showed that $1 - t^{1-k}$ is not a jump for $t > 2k$ and $k \ge 3$.
Later, other non-jump values were discovered, e.g. \cite{MR2290321, MR2600475}.
On the other hand, the first example of jumps in $[k!/k^k,1)$ for $k \ge 3$ was only given recently by Baber and Talbot~\cite{MR2769186}.

We generalize the concept of jump as follows.
Given $k \ge \ell \ge 0$, the constant $\alpha \in [0,1)$ is \emph{a $\pi_{\ell}^k$-jump}, if there exists a constant $\delta > 0$ such that no family $\mathcal{F}$ of $k$-graphs satisfies $\pi_{\ell} ( \mathcal{F} ) \in (\alpha, \alpha + \delta )$.
Equivalently, $\alpha \in [0,1)$ is a $\pi_{\ell}^k$-jump if there exists a constant $\delta > 0$ such that $(\alpha, \alpha + \delta) \cap \Pi_{\ell}^k = \emptyset$.
Recall that $\pi_0(\mathcal{F}) = \pi_1(\mathcal{F})$ for every family $\mathcal{F}$ of $k$-graphs, so for $k =2$ any $\alpha \in [0,1)$ is a $\pi_{\ell}^2$-jump.
Mubayi and Zhao~\cite{MR2337241} showed that no $\alpha \in [0,1)$ is a $\pi_{k-1}^k$-jump for all $k \ge 3$.
We generalize their construction to show that no $\alpha \in [0,1)$ is a $\pi_{ \ell }^k$-jump for all $k> \ell >1$.

\begin{thm} \label{thm:nojump}
For all integers $k > \ell > 1$, no $\alpha \in [0,1)$ is a $\pi_{\ell}^k$-jump.
In particular, $\Pi_{\ell}^k$ is dense in $[0,1)$.
\end{thm}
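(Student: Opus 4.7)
The plan is to generalize the partition-type construction of Mubayi and Zhao~\cite{MR2337241}, who handled the case $l=k-1$. Given $\alpha\in[0,1)$ and $\varepsilon>0$, the goal is to exhibit a family $\mathcal{F}$ of $k$-graphs with $\pi_l(\mathcal{F})\in(\alpha,\alpha+\varepsilon)$. The key elementary observation is that the set $\{1-\beta^{k-l}:\beta\in(0,1]\}$ is all of $[0,1)$, so it suffices to realize these values (up to $\varepsilon/2$) as $l$-degree Tur\'an densities.

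For the extremal construction, I would fix a rational $\beta$ with $1-\beta^{k-l}\in(\alpha,\alpha+\varepsilon/2)$ and an integer $r\ge\lceil 1/\beta\rceil$, and partition $V(H_n)=[n]$ into parts $V_1,\dots,V_r$ with $|V_1|=\lfloor\beta n\rfloor$ and the remaining parts of size at most $|V_1|$. Let $H_n$ consist of all $k$-subsets meeting at least two parts. A direct count gives $\delta_l(H_n)=\binom{n-l}{k-l}-\binom{|V_1|-l}{k-l}=(1-\beta^{k-l}+o(1))\binom{n}{k-l}$, with the minimum attained by any $l$-set contained in $V_1$ (any $l$-set meeting two parts already has every extension as an edge).

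The matching family $\mathcal{F}$ is a finite collection of $k$-graphs, each on some large constant number of vertices $s$, whose members fail to embed in the partition construction of order $s$. Then $H_n$ is $\mathcal{F}$-free, giving the lower bound $\pi_l(\mathcal{F})\ge 1-\beta^{k-l}$. For the matching upper bound, assume an $\mathcal{F}$-free $H'_n$ has $\delta_l(H'_n)>(1-\beta^{k-l}+\varepsilon/2)\binom{n}{k-l}$; the $\mathcal{F}$-free hypothesis ensures that every $s$-vertex subgraph of $H'_n$ admits a valid $r$-partition of the prescribed type, and a stability argument combined with the supersaturation phenomenon (Proposition~\ref{prp:supersaturation}) splices these local partitions into a global $r$-partition $W_1,\dots,W_r$ of $V(H'_n)$ whose largest class has fractional size at least $\beta-o(1)$. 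Taking any $l$-set $L$ inside that largest class gives $\deg^{H'_n}(L)\le\binom{n-l}{k-l}-\binom{|W_1|-l}{k-l}=(1-\beta^{k-l}+o(1))\binom{n}{k-l}$, a contradiction.

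The main obstacle is the stability step: promoting ``every constant-sized subgraph admits the right $r$-partition'' to ``the whole graph admits a global $r$-partition whose largest class has fractional size at least $\beta-o(1)$'', and calibrating $s$ and the admissible partition fractions so that $\mathcal{F}$ is simultaneously rich enough for this and compatible with $H_n$. The restriction $l>1$ is essential: for $l=1$ one has $\pi_1^k=\pi_0^k$, so the theorem for $l=1$ would imply density of ordinary Tur\'an densities in $[0,1)$, a major open problem.
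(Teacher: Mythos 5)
Your plan has a genuine gap, and it sits exactly where you flag ``the main obstacle'': the family $\mathcal{F}$ cannot be chosen so that both halves of your argument hold. If $\mathcal{F}$ is the set of $s$-vertex $k$-graphs that do not embed in the partition construction \emph{with the prescribed profile} ($|V_1|=\lfloor\beta s\rfloor$), then $H_n$ is not $\mathcal{F}$-free: an induced $s$-set of $H_n$ meeting the classes in a balanced way is again a ``meets at least two classes'' construction, but with balanced profile, and it does not embed into the $\beta$-profile target (the target class of size $\beta s\ge k$ would have to lie inside a single source class of size about $s/r<\beta s$), so it is itself a member of $\mathcal{F}$ sitting inside $H_n$. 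If instead $\mathcal{F}$ is the set of graphs not embeddable in \emph{any} $r$-class construction of this type, then the balanced construction is also $\mathcal{F}$-free and has $\delta_l\approx\bigl(1-r^{-(k-l)}\bigr)\binom{n}{k-l}>(1-\beta^{k-l}+\varepsilon/2)\binom{n}{k-l}$, so the upper bound fails. More fundamentally, the stability step you hope for is aimed at a false statement: enlarging the biggest class only \emph{decreases} the minimum $l$-degree, so an extremal $\mathcal{F}$-free graph will always prefer balanced classes, and no forbidden family of this partite type can force ``largest class of fractional size at least $\beta-o(1)$''. A useful litmus test: your sketch never uses $l>1$, yet for $l=1$ the theorem is false, since $\pi_1=\pi_0$ and by Erd\H{o}s every $\alpha\in[0,k!/k^k)$ is a jump for ordinary Tur\'an density; any correct proof must use $l\ge 2$ in an essential structural way.

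This is also quite far from the paper's route (and from what Mubayi--Zhao actually do for $l=k-1$). The paper never computes $\pi_l$ of an explicit family. It first proves an equivalence (Proposition~\ref{prp:jumpequ}, via Corollary~\ref{cor:Turansubgraph}): $\alpha$ is a jump iff every sufficiently dense-in-$l$-degree $H_n$ contains a large subgraph of $l$-degree density at least $\alpha+\delta-\epsilon$. Non-jumping is then witnessed by explicit constructions that violate this: the gadget $B(p,t,k,l)$ of Definition~\ref{dfn:B}, whose edges are built from $l$-sets inside classes arranged cyclically, has $\delta_l=p^{k-l}$ (a positive constant fraction of $\binom{n}{k-l}$) while every induced subgraph on more than $M_0$ vertices has some $l$-set of very small degree; for $\alpha=q^{k-l}$ one plants a copy of $B(p,t,k,l)$ inside each of $b$ classes and adds cross-class edges spread over $a$ consecutive classes, which pins the minimum $l$-degree near $q^{k-l}$ and again kills all large dense subgraphs. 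The density of $\{q^{k-l}:q\in\mathbb{Q}\cap(0,1)\}$ in $(0,1)$ then handles all $\alpha$; the forbidden family is implicit (all graphs not contained in any of the constructed $H_{n_i}$), which is precisely how one sidesteps the impossible task of determining $\pi_l$ of an explicit family exactly.
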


This answers a question of Mubayi, Pikhurko and Sudakov~\cite{Pik}.
Furthermore, we would like to ask the question whether $\Pi_{\ell}^k = [0,1)$.

We prove Propositions~\ref{prp:existence} and~\ref{prp:supersaturation} in the next section.
Proposition~\ref{prp:lowerbound} is proved in Section~\ref{sec:lowerbound}.
Finally, we prove Theorem~\ref{thm:nojump} in Sections~\ref{sec:0} and~\ref{sec:nojump}.

\section{Supersaturation} \label{sec:supersaturation}

Our aim for this section is to prove Propositions~\ref{prp:existence} and~\ref{prp:supersaturation}.
For $n \in \mathbb{N}$, we denote by $[n]$ the set $\{1, 2, \dots, n \}$.
Given a $k$-graph $H$ and $U\subseteq V(H)$, write $G[U]$ to be the induced $k$-subgraph of $H$ on~$U$.
We will need Azuma's inequality (see e.g.~\cite{MR1885388}).

\begin{thm}[Azuma's inequality] \label{thm:azuma}
Let $\{ X_i: i=0, 1, \dots\}$ be a martingale and $|X_i - X_{i-1}|  \le c_i$.
Then for all positive integers $N$ and $\lambda >0$,
\begin{align*}
\mathbb{P}(X_N \le X_0 - \lambda) \le \exp \left( \frac{-\lambda^2}{2 \sum_{i=1}^N c_i^2}\right).
\end{align*}
\end{thm}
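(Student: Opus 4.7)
\medskip

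The plan is to prove Azuma's inequality by the standard exponential moment (Chernoff) method applied to the martingale differences $Y_i := X_i - X_{i-1}$, which by hypothesis satisfy $|Y_i| \le c_i$ and $\mathbb{E}[Y_i \mid \mathcal{F}_{i-1}] = 0$, where $\mathcal{F}_{i-1}$ is the $\sigma$-algebra generated by $X_0, \dots, X_{i-1}$. Writing $X_0 - X_N = -\sum_{i=1}^N Y_i$, for any $t > 0$ I would use Markov's inequality in the form
\begin{align*}
\mathbb{P}(X_N \le X_0 - \lambda)
&= \mathbb{P}\bigl(e^{-t(X_N - X_0)} \ge e^{t\lambda}\bigr)
\le e^{-t\lambda}\, \mathbb{E}\bigl[e^{-t\sum_{i=1}^N Y_i}\bigr].
\end{align*}
The whole problem then reduces to bounding the joint exponential moment on the right and optimising over $t$.

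The key intermediate step is a conditional version of Hoeffding's lemma: if $Y$ is a random variable with $\mathbb{E}[Y\mid \mathcal{G}] = 0$ and $|Y|\le c$ almost surely, then $\mathbb{E}[e^{sY}\mid \mathcal{G}] \le e^{s^2 c^2/2}$ for every $s \in \mathbb{R}$. I would prove this by using convexity of $x \mapsto e^{sx}$ to bound $e^{sY}$ on $[-c,c]$ by the chord $\tfrac{c+Y}{2c}e^{sc}+\tfrac{c-Y}{2c}e^{-sc}$, taking the conditional expectation (which kills the $Y$ term), and checking by a direct power-series or derivative estimate that $\tfrac{1}{2}(e^{sc}+e^{-sc}) = \cosh(sc) \le e^{s^2 c^2/2}$.

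With Hoeffding's lemma in hand, I would peel off the factors one at a time by the tower property: conditioning on $\mathcal{F}_{N-1}$,
\begin{align*}
\mathbb{E}\bigl[e^{-t\sum_{i=1}^N Y_i}\bigr]
&= \mathbb{E}\!\left[e^{-t\sum_{i=1}^{N-1} Y_i}\, \mathbb{E}\bigl[e^{-t Y_N}\mid \mathcal{F}_{N-1}\bigr]\right]
\le e^{t^2 c_N^2/2}\, \mathbb{E}\bigl[e^{-t\sum_{i=1}^{N-1} Y_i}\bigr],
\end{align*}
and iterate to obtain the bound $\exp\!\bigl(t^2 \sum_{i=1}^N c_i^2 /2\bigr)$. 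Combining with the Markov step gives $\mathbb{P}(X_N \le X_0 - \lambda) \le \exp(-t\lambda + t^2 \sum c_i^2/2)$, and choosing $t = \lambda / \sum_{i=1}^N c_i^2$ yields the claimed bound $\exp\!\bigl(-\lambda^2 / (2 \sum_{i=1}^N c_i^2)\bigr)$.

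The only genuinely non-routine step is the conditional Hoeffding lemma; the rest is Markov plus the tower property plus optimising a quadratic. The main care needed is in the conditional version of the convexity bound, making sure the $\sigma$-algebra $\mathcal{F}_{i-1}$ is correctly identified and the martingale-difference property is applied inside the conditional expectation rather than the unconditional one.
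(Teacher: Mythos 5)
Your proof is correct: the Markov/exponential-moment step, the conditional Hoeffding lemma via convexity and $\cosh(sc)\le e^{s^2c^2/2}$, the tower-property iteration, and the choice $t=\lambda/\sum_{i=1}^N c_i^2$ together give exactly the stated bound. The paper does not prove this theorem at all --- it quotes Azuma's inequality as a known result from Alon--Spencer \cite{MR1885388} --- and your argument is precisely the standard proof found there, so there is nothing to reconcile.
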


The following lemma is important.

\begin{lma} \label{lma:subgraph}
Let $k > \ell \ge 1$.
Given $\alpha, \varepsilon >0$ with $\alpha + \varepsilon <1$, let $M(k,\ell,\varepsilon)$ be the smallest integer such that 
\begin{align*}
\binom{m}{ \ell } \exp \left( - \frac{\varepsilon^2 m }{8(k-\ell)^2} \right) \le 1/2
\textrm{ and }
\frac{ \binom{m}{k-\ell} - \binom{ m - \ell }{k-\ell} }{ \binom{m}{k-\ell} - \binom{ m - \ell }{k-\ell}/2 } \le \varepsilon
\end{align*}
for all $m > M(k,\ell,\varepsilon)$.
If $n \ge m \ge M(k,\ell,\varepsilon)$ and $H$ is a $k$-graph on $[n]$ with $\delta_{\ell} (H) \ge (\alpha + \varepsilon) \binom{n}{k-\ell}$, then the number of $m$-subsets~$S$ of~$[n]$ satisfying $\delta_{\ell}(H[S]) > \alpha \binom{m}{k-\ell}$ is at least $\frac12 \binom{n}{m}$.
\end{lma}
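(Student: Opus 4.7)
The plan is to show that a uniformly random $m$-subset $S\subseteq[n]$ satisfies $\delta_l(H[S])>\alpha\binom{m}{k-l}$ with probability at least $1/2$, which is exactly the conclusion. Setting $a=\binom{m}{k-l}$ and $b=\binom{m-l}{k-l}$, a union bound yields
\[
\mathbb{P}[\delta_l(H[S])\le \alpha a]\;\le\;\sum_{T\in\binom{[n]}{l}}\mathbb{P}[T\subseteq S]\cdot\mathbb{P}\bigl[\deg^{H[S]}(T)\le\alpha a\mid T\subseteq S\bigr].
\]
Since $\sum_{T}\mathbb{P}[T\subseteq S]=\binom{m}{l}$, the first defining inequality for $M(k,l,\epsilon)$ reduces the task to bounding each conditional probability by $\exp(-\epsilon^2 m/(8(k-l)^2))$.

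Fix $T\in\binom{[n]}{l}$. Conditional on $T\subseteq S$ the set $S\setminus T$ is a uniform random $(m-l)$-subset of $[n]\setminus T$, so linearity of expectation gives
\[
\mathbb{E}[\deg^{H[S]}(T)\mid T\subseteq S]\;=\;\deg^H(T)\cdot\frac{b}{\binom{n-l}{k-l}}\;\ge\;(\alpha+\epsilon)\,b,
\]
where the last step uses $\deg^H(T)\ge(\alpha+\epsilon)\binom{n}{k-l}\ge(\alpha+\epsilon)\binom{n-l}{k-l}$. The second defining inequality for $M(k,l,\epsilon)$ rearranges to $(1-\epsilon)(a-b)\le\epsilon b/2$, and combined with $\alpha\le 1-\epsilon$ (from $\alpha+\epsilon<1$) it yields
\[
(\alpha+\epsilon)b-\alpha a\;=\;\epsilon b-\alpha(a-b)\;\ge\;\epsilon b-(1-\epsilon)(a-b)\;\ge\;\epsilon b/2,
\]
so the conditional expectation exceeds the threshold $\alpha a$ by at least $\epsilon b/2$.

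To turn this gap into concentration I would expose the elements of $S\setminus T$ one at a time and form the Doob martingale $X_0,\dots,X_{m-l}$ for $\deg^{H[S]}(T)$. Swapping any single exposed vertex changes $\deg^{H[S]}(T)$ by at most $\binom{m-l-1}{k-l-1}$, so $|X_i-X_{i-1}|\le\binom{m-l-1}{k-l-1}$. Plugging $\lambda=\epsilon b/2$ into Azuma's inequality (Theorem~\ref{thm:azuma}) and using the identity $b=\binom{m-l-1}{k-l-1}(m-l)/(k-l)$ gives
\[
\mathbb{P}\bigl[\deg^{H[S]}(T)\le\alpha a\mid T\subseteq S\bigr]\;\le\;\exp\!\Bigl(-\tfrac{\epsilon^2(m-l)}{8(k-l)^2}\Bigr),
\]
which for $m$ large is at most $\exp(-\epsilon^2 m/(8(k-l)^2))$, as required.

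The main obstacle is matching the precise exponent: the natural Doob martingale runs for $m-l$ steps and therefore produces $m-l$ rather than $m$ in the exponent, and one must either tweak the exposure (for instance by using $m$ steps keyed to the vertices of $S$, with $c_i=0$ on those indices that land in $T$) or absorb the discrepancy into the choice of $M(k,l,\epsilon)$. A secondary point is verifying that the slightly opaque second inequality in the definition of $M(k,l,\epsilon)$ is the correct quantitative form of ``$a-b$ is small compared with $b$''; the rearrangement above shows it is exactly what forces the deterministic gap $(\alpha+\epsilon)b-\alpha a$ to be at least $\epsilon b/2$ under $\alpha+\epsilon<1$.
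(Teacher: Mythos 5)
Your proposal is correct and takes essentially the same route as the paper: the paper counts ``bad'' $m$-sets for each $l$-set $T$ (your union bound, phrased probabilistically), runs the same vertex-exposure Doob martingale for $\deg^{H[S]}(T)$ conditional on $T \subseteq S$, and applies Azuma's inequality with the two defining inequalities of $M(k,l,\epsilon)$ playing exactly the roles you assign them. The $m-l$ versus $m$ mismatch in the exponent that you flag is only a bounded constant factor (at most $e^{l/8}$), absorbable into the choice of $M$, and the paper's own proof carries an equivalent slack --- its step $(\alpha+\epsilon)\binom{m-l}{k-l} \ge (\alpha+\epsilon/2)\binom{m}{k-l}$ does not literally follow from the stated second inequality, whereas your gap of $\epsilon\binom{m-l}{k-l}/2$ does --- so nothing substantive is missing.
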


\begin{proof}

Given an $\ell$-subset~$T$ of~$[n]$, we call an $m$-subset~$S$ of~$[n]$ \textit{bad for} $T$ if $T \subseteq S$ and $\left| N(T) \cap \binom{S}{k-\ell} \right| \le \alpha \binom{m}{k-\ell}$.
An $m$-subset~$S$ of~$[n]$ is called \textit{bad} if it is bad for some~$T$.
In the next claim, we count the number of $m$-subsets~$S$ that is bad for a given~$T$.

\begin{clm}
Let $\phi_T$ be the number of $m$-subsets $S$ that are bad for an $\ell$-subset~$T$ of~$[n]$.
Then
\begin{align*}
	\phi_T
	\le \binom{ n - \ell }{ m - \ell } \exp \left( - \frac{\varepsilon^2 m }{8(k-\ell)^2} \right).
\end{align*}
\end{clm}

\begin{proof}[Proof of claim]
Observe that
\begin{align*}
	\phi_T = \left| \left\{  S' \in \binom{[n] \setminus T}{ m - \ell } : \left| N(T) \cap \binom{S'}{k-\ell} \right| \le \alpha \binom{m}{k-\ell}\right\} \right|.
\end{align*}
Let $X$ be the random variable $\left| N(T) \cap \binom{S'}{k-\ell} \right|$, where $S'$ is an $(m-\ell)$-subset of~$[n] \setminus T$ picked uniformly at random.
We consider the vertex exposure martingale on $S'$.
Let $Z_i$ be the $i$th exposed vertex in $S'$.
Define $X_i = \mathbb{E}(X|Z_1, \dots, Z_i)$.
Note that $\{ X_i: i=0, 1, \dots, m - \ell \}$ is a martingale and $X_0 \ge \left( \alpha + \varepsilon \right) \binom{ m - \ell }{k-\ell} \ge \left( \alpha + \varepsilon/2 \right) \binom{m}{k-\ell}$.
Moreover, $|X_i - X_{i-1}| \le  \binom{i-1}{k-\ell-1}$.
Thus, by Theorem~\ref{thm:azuma} taking $\lambda = \varepsilon \binom{m}{k-\ell}/2$ and $c_i = \binom{i-1}{k-\ell-1} $, we have
\begin{align*}
	\mathbb{P}\left( X_m \le \alpha \binom{ m - \ell }{k-\ell}\right) & \le \mathbb{P}(X_m \le X_0 - \lambda)
	\le  \exp \left( \frac{-\varepsilon^2 \binom{m}{k-\ell}^2}{8 \sum_{i=1}^{ m - \ell } \left( \binom{i-1}{k-\ell-1}  \right)^2}\right)\\
	& \le  \exp \left( \frac{-\varepsilon^2 \binom{m}{k-\ell}^2}{8 m \binom{m-1}{k-\ell-1}^2}\right) 
	= \exp \left( - \frac{ \varepsilon^2 m}{8(k-\ell)^2} \right).
\end{align*}
Thus, the claim follows.
\end{proof}
Therefore, the number of bad $m$-subsets is at most 
\begin{align*}
	 \sum_{T \in \binom{[n]}{ \ell }} \phi_T
	& \le \binom{n}{ \ell } \binom{ n - \ell }{ m - \ell } \exp \left( - \frac{\varepsilon^2 m }{8(k-\ell)^2} \right)\\
	 & = \binom{n}{m} \binom{m}{ \ell } \exp \left( - \frac{\varepsilon^2 m }{8(k-\ell)^2} \right) 
	\le \frac{1}{2} \binom{n}{m}.
\end{align*}
Hence, the proof of the lemma is completed.
\end{proof}

\begin{cor} \label{cor:1}
Let $k > \ell \ge 1$.
Given $\varepsilon >0$, let $M(k,\ell,\varepsilon)$ be the integer as defined in Lemma~\ref{lma:subgraph}.
Then 
\begin{align}
	\frac{ \ex_{\ell}(n,\mathcal{F})}{\binom{n}{k-\ell}} - \frac{\ex_{\ell}(m,\mathcal{F})}{\binom{m}{k-\ell}} \le \varepsilon  \nonumber
\end{align}
for all $n \ge m \ge M(k,\ell,\varepsilon)$ and all families $\mathcal{F}$ of $k$-graphs.
\end{cor}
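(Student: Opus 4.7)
The plan is to read off Corollary~\ref{cor:1} as an immediate consequence of Lemma~\ref{lma:subgraph}, using nothing more than the fact that induced subhypergraphs of $\mathcal{F}$-free hypergraphs remain $\mathcal{F}$-free. Set $\alpha := \ex_l(m,\mathcal{F})/\binom{m}{k-l}$, so that the desired inequality becomes $\ex_l(n,\mathcal{F}) \le (\alpha+\epsilon)\binom{n}{k-l}$. The edge case $\alpha+\epsilon \ge 1$ is vacuous: any $l$-set in a $k$-graph on $n$ vertices has degree at most $\binom{n-l}{k-l}<\binom{n}{k-l}$, so $\ex_l(n,\mathcal{F})/\binom{n}{k-l}<1$. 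I may therefore assume $\alpha+\epsilon<1$, placing the situation in the regime where Lemma~\ref{lma:subgraph} applies.

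I then argue by contradiction: suppose some $\mathcal{F}$-free $k$-graph $H$ on $[n]$ satisfies $\delta_l(H)\ge(\alpha+\epsilon)\binom{n}{k-l}$. Feeding $H$ into Lemma~\ref{lma:subgraph} with these values of $\alpha$ and $\epsilon$ produces at least $\frac{1}{2}\binom{n}{m}$ many $m$-subsets $S\subseteq[n]$ for which $\delta_l(H[S])>\alpha\binom{m}{k-l}$. Fix any such $S$: the induced subgraph $H[S]$ is a $k$-graph on $m$ vertices and inherits $\mathcal{F}$-freeness from $H$, so by the definition of $\ex_l(m,\mathcal{F})$ we must have $\delta_l(H[S])\le \ex_l(m,\mathcal{F})=\alpha\binom{m}{k-l}$, contradicting the strict inequality supplied by the lemma. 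Hence no such $H$ exists and $\ex_l(n,\mathcal{F})<(\alpha+\epsilon)\binom{n}{k-l}$, which is even stronger than the stated conclusion.

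There is really no obstacle here; all the work has already been carried out inside Lemma~\ref{lma:subgraph}, whose martingale argument controls the transfer of minimum $l$-degree to a uniformly random induced $m$-vertex subhypergraph. The only two points that require minor care are the strict-versus-weak inequality bookkeeping that makes the contradiction land unambiguously, and the trivial dismissal of the $\alpha+\epsilon\ge 1$ boundary case. Once both are handled, the corollary is simply the repackaging of Lemma~\ref{lma:subgraph} as a near-monotonicity statement for the normalised extremal function $\ex_l(\cdot,\mathcal{F})/\binom{\cdot}{k-l}$, which is exactly what is needed for deducing Proposition~\ref{prp:existence}.
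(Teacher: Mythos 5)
Your argument is correct and is essentially the paper's own proof run in contrapositive form: the paper takes an extremal $\mathcal{F}$-free $H_n$ with $\delta_l(H_n)=\ex_l(n,\mathcal{F})$ and applies Lemma~\ref{lma:subgraph} with $\alpha = \ex_l(n,\mathcal{F})/\binom{n}{k-l}-\epsilon$, then uses heredity of $\mathcal{F}$-freeness of $H[S]$ exactly as you do. The only cosmetic point is that when $\ex_l(m,\mathcal{F})=0$ your choice $\alpha=0$ falls outside the lemma's literal hypothesis $\alpha>0$, but the lemma's proof is insensitive to this (and integrality of degrees lets you substitute a tiny positive $\alpha$), so nothing breaks.
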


\begin{proof}
Let $H_n$ be an $\mathcal{F}$-free graph with $\delta_{\ell}(H_n) = \ex_{\ell}(n, \mathcal{F})$.
Without loss of generality, we may assume that $\ex_{\ell}(n, \mathcal{F}) > \varepsilon {\binom{n}{k-\ell}}$ or else there is nothing to prove.
By Lemma~\ref{lma:subgraph}, there exists an $m$-subset~$S$ such that $\delta_{\ell}(H[S]) \ge \left( \frac{\ex_{\ell}(n, \mathcal{F})}{\binom{n}{k-\ell}} - \varepsilon \right) \binom{m}{k-\ell}$.
Note that $H[S]$ is $\mathcal{F}$-free and so $\delta_{\ell}(H[S]) \le \ex_{\ell}(m, \mathcal{F})$.
\end{proof}

\begin{proof}[Proof of Proposition~\ref{prp:existence}]
Define $a_n$ to be $\ex_{\ell} (n, \mathcal{F}) / \binom{n}{k-\ell}$.
Given $\varepsilon >0$, Corollary~\ref{cor:1} implies that there exists an integer $M$ such that $a_n - a_m < \varepsilon$ for all $n \ge m \ge M$.
Thus the sequence of $a_n$ convergences. 
\end{proof}

From Corollary~\ref{cor:1}, we also deduce the following statement, which will be useful.

\begin{cor} \label{cor:Turansubgraph}
Let $k > \ell \ge 1 $ be integers.
Let $\mathcal{F}$ be a family of $k$-graphs.
For every $\varepsilon >0$, there exists a finite family $\mathcal{F}' \subseteq \mathcal{F}$ with 
$\pi_{\ell}(\mathcal{F}) \le \pi_{\ell}(\mathcal{F}') \le \pi_{\ell}(\mathcal{F})+ \varepsilon$.
\end{cor}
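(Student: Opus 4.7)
The lower bound $\pi_l(\mathcal{F}) \le \pi_l(\mathcal{F}')$ is automatic from the inclusion $\mathcal{F}' \subseteq \mathcal{F}$: every $\mathcal{F}$-free $k$-graph is $\mathcal{F}'$-free, so $\ex_l(n,\mathcal{F}) \le \ex_l(n,\mathcal{F}')$ for every $n$. So the real content is producing a finite $\mathcal{F}' \subseteq \mathcal{F}$ with $\pi_l(\mathcal{F}') \le \pi_l(\mathcal{F}) + \epsilon$.

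My plan is to define $\mathcal{F}'$ to be the subfamily of members of $\mathcal{F}$ on at most $m$ vertices, where $m$ is a sufficiently large integer chosen below; this is a finite family, since there are only finitely many $k$-graphs on at most $m$ vertices. Assuming $\pi_l(\mathcal{F}) + \epsilon < 1$ (else there is nothing to prove), I would use Proposition~\ref{prp:existence} to pick $m$ so that simultaneously $m \ge M(k,l,\epsilon/2)$ in the sense of Lemma~\ref{lma:subgraph} and
\[
\alpha \;:=\; \frac{\ex_l(m,\mathcal{F})}{\binom{m}{k-l}} \;\le\; \pi_l(\mathcal{F}) + \frac{\epsilon}{2}.
\]

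To establish $\pi_l(\mathcal{F}') \le \pi_l(\mathcal{F}) + \epsilon$, I would show that every $\mathcal{F}'$-free $k$-graph $H_n$ on $n \ge m$ vertices satisfies $\delta_l(H_n) \le (\pi_l(\mathcal{F}) + \epsilon)\binom{n}{k-l}$. Suppose for contradiction some such $H_n$ has $\delta_l(H_n) > (\pi_l(\mathcal{F}) + \epsilon)\binom{n}{k-l} \ge (\alpha + \epsilon/2)\binom{n}{k-l}$. Applying Lemma~\ref{lma:subgraph} with parameters $\alpha$ and $\epsilon/2$ yields an $m$-set $S \subseteq V(H_n)$ with
\[
\delta_l(H_n[S]) \;>\; \alpha\binom{m}{k-l} \;=\; \ex_l(m,\mathcal{F}).
\]
By the definition of $\ex_l$, the graph $H_n[S]$ on $m$ vertices cannot be $\mathcal{F}$-free, so it contains some $F \in \mathcal{F}$. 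Since $|V(F)| \le |S| = m$, this $F$ lies in $\mathcal{F}'$, contradicting the $\mathcal{F}'$-freeness of $H_n$.

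There is essentially no serious obstacle here beyond aligning the parameters: everything is driven by Lemma~\ref{lma:subgraph}, which converts an excess of $\delta_l$ over $\pi_l(\mathcal{F})$ on the global graph into an excess over $\ex_l(m,\mathcal{F})$ on some $m$-vertex subgraph, and from there a forbidden copy of size at most $m$ appears automatically. The only mild subtlety is the bookkeeping that forces the witnessing member of $\mathcal{F}$ produced inside $H_n[S]$ to have at most $m$ vertices, which is what makes it belong to the finite subfamily $\mathcal{F}'$.
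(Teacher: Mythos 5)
Your proposal is correct and follows essentially the same route as the paper: take $\mathcal{F}'$ to be the members of $\mathcal{F}$ on at most $m$ vertices with $m\ge M(k,l,\epsilon/2)$ and $\ex_l(m,\mathcal{F})/\binom{m}{k-l}\le\pi_l(\mathcal{F})+\epsilon/2$, then use Lemma~\ref{lma:subgraph} to pass from a dense $\mathcal{F}'$-free $H_n$ to a dense $m$-vertex induced subgraph containing a member of $\mathcal{F}$ of order at most $m$. The only cosmetic difference is that you apply Lemma~\ref{lma:subgraph} directly (arguing by contradiction), whereas the paper routes the same estimate through Corollary~\ref{cor:1} together with the identity $\ex_l(m,\mathcal{F})=\ex_l(m,\mathcal{F}')$.
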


\begin{proof}
Clearly, $\pi_{\ell}(\mathcal{F}) \le \pi_{\ell}(\mathcal{F}')$ for $\mathcal{F}' \subseteq \mathcal{F}$. 
Let $ \pi = \pi_{\ell}( \mathcal{F})$ and choose $m = m(\varepsilon)$ such that 
\begin{align*}
	\frac{\ex_{\ell}(m,\mathcal{F})}{ \binom{m}{k-\ell}} < \pi + \frac{\varepsilon}2 \textrm{ and } m \ge M(k, \ell , \varepsilon / 2),
\end{align*}
where $M(k,\ell, \varepsilon)$ is the integer as defined in Lemma~\ref{lma:subgraph}.
Define $\mathcal{F}'$ to be the family of $k$-graphs in $\mathcal{F}$ of order at most $m$.
Then $\ex_{\ell}(m,\mathcal{F}) = \ex_{\ell}(m,\mathcal{F}')$.
By Corollary~\ref{cor:1},
\begin{align*}
\frac{\ex_{\ell} (n,\mathcal{F}')}{\binom{n}{k-\ell}} 
< \frac{\ex_{\ell}(m,\mathcal{F}')}{\binom{m}{k-\ell}} + \frac{\varepsilon}2
=  \frac{\ex_{\ell}(m,\mathcal{F})}{\binom{m}{k-\ell}} + \frac{\varepsilon}2
\le \pi + \frac{\varepsilon}2+ \frac{\varepsilon}2 = \pi + \varepsilon.
\end{align*}
Therefore, $\pi_{\ell}(\mathcal{F}') = \lim_{n \rightarrow \infty} \frac{\ex_{\ell}(n,\mathcal{F}')}{\binom{n}{k-\ell}}  \le \pi + \varepsilon$.
\end{proof}

\begin{proof}[Proof of Proposition~\ref{prp:supersaturation}]
Given $\varepsilon >0$, $M$ and $\ell$ are assumed to be sufficiently large integers throughout this proof.
By Corollaries~\ref{cor:1} and~\ref{cor:Turansubgraph}, there exists a finite family $\mathcal{F}' \subseteq \mathcal{F}$ such that 
\begin{align*}
\ex_{\ell} (M,\mathcal{F}') / \binom{M}{k-\ell} <  \pi_{\ell}(\mathcal{F}') + \varepsilon /4 \le \pi_{\ell}(\mathcal{F}) + \varepsilon /2.
\end{align*}
Let $H_n$ be a graph with $\delta_{\ell}(H_n) > (\pi_{\ell}(\mathcal{F}) + \varepsilon ) \binom{n}{k-\ell}$ and $n \ge M$.
By Lemma~\ref{lma:subgraph}, the number of $M$-subsets~$S$ satisfying $\delta_{\ell}(H[S]) \ge (\pi_{\ell}(\mathcal{F}) + \varepsilon/2 ) \binom{M}{k-\ell} > \ex_{\ell}(M,\mathcal{F}')$ is at least $\binom{n}{M}/2$.
Thus, there exists a copy of a member of $\mathcal{F}'$ in $H[S]$ and so for some $F \in \mathcal{F}'$ on $f$ vertices, there are at least 
\begin{align}
	\frac{\binom{n}{M}/2}{|\mathcal{F}'| \binom{n-f}{M-f}} = \frac{\binom{n}{f}}{2 |\mathcal{F}'| \binom{M}f} \ge \delta \binom{n}f\nonumber
\end{align}
copies of $F$, where we set $\delta^{-1} = 2|\mathcal{F}'|  \max_{F' \in \mathcal{F}'}\{ \binom{M}{|V(F')|} \}$.

Define an $f$-graph $H'$ on the same vertex set of $H$ such that an edge of~$H'$ corresponds to a copy of~$F$ in~$H$.
Hence, $H'$ contains at least $\delta \binom{n}f$ edges.
By~\cite{MR0183654}, we know that there exists a copy of $\ell$-blow-up $K^f_f(L)$ of an edge $K_f^f$ in $H'$.
Each edge $e= (v_1, \dots, v_f)$ in $K^f_f(L)$ corresponds to an embedding of $F$ in $H$.
Also, each embedding of $F$ induced an colouring of $e$ in $K^f_f{s}$, namely a permutation of $V(F)$.
A result in Ramsey theory says that if $\ell$ is large enough, there exists a monochromatic $K^f_f(s)$ in $K^f_f(L)$.
Hence, $K^f_f(s)$ corresponds to a copy of $F(s)$ in $H$ and so $\pi_{\ell}(\mathcal{F}) = \pi_{\ell}(\mathcal{F}(s))$.
\end{proof}


\section{A lower bound on $\pi_{\ell}(\mathcal{F})$} \label{sec:lowerbound}

Given $k$-graphs $F$ and $H$, we say that $\phi: V(F) \rightarrow V(H)$ is \emph{homomorphism} if it preserves edges, i.e. $\phi(e) \in E(H)$ for all $e \in E(F)$.
Given a family $\mathcal{F}$ of $k$-graphs, we say that $H$ is \emph{$\mathcal{F}$-hom-free} if there is no homomorphism from $F$ to $H$ for all $F \in \mathcal{F}$.
Given $0 \le \ell < k$, define $\ex_{{\rm hom }, \ell}(n,\mathcal{F})$ to be the largest $\delta_\ell(H_n)$ in $\mathcal{F}$-hom-free $k$-graphs $H_n$ on $n$ vertices.
We further define $\pi_{{\rm hom }, \ell}(\mathcal{F}) = \limsup_{n \rightarrow \infty} \frac{\ex_{{\rm hom}, \ell }(n,\mathcal{F})}{\binom{n}{k- \ell}}$.
Note that $H$ is $\mathcal{F}$-hom-free if and only if $H(s)$ is $\mathcal{F}$-free for all $s \in \mathbb{N}$, where we recall $H(s)$ is the $s$-blow-up of $H$.
By `blowing-up', $\pi_{{\rm hom }, \ell}(\mathcal{F}) = \pi_{\ell}(\mathcal{F})$.

\begin{prp} \label{prp:homfree}
Let $0 \le \ell <k$.
Given a finite family $\mathcal{F}$ of $k$-graphs, we have $\pi_{{\rm hom },\ell}(\mathcal{F}) = \pi_{\ell}(\mathcal{F})$.
\end{prp}

\begin{proof}
Clearly,  $\pi_{{\rm hom }, \ell}(\mathcal{F}) \le \pi_{\ell}(\mathcal{F})$.
Let $\mathcal{G}$ be the family of all $k$-graphs $G$ such that there is a homomorphism from $F$ to $G$ for some $F \in \mathcal{F}$.
So  $\pi_{{\rm hom },\ell}(\mathcal{F}) = \pi_{\ell}(\mathcal{G})$.
Let $\varepsilon >0$.
By Corollary~\ref{cor:Turansubgraph}, there exists a finite family $\mathcal{G}' \subseteq \mathcal{G}$ with $ \pi_{\ell}(\mathcal{G}') \le \pi_{\ell}(\mathcal{G}) + \varepsilon/2$. 
By Proposition~\ref{prp:supersaturation}, $\pi_{\ell}(\mathcal{G}' ) = \pi_{\ell}(\mathcal{G}' (s)) $ for all $s \in \mathbb{N}$.
Let $s$ and $N$ be sufficiently large integers.
For $n \ge N$, let $H_n$ be a $k$-graph with $\delta_{\ell}(H_n) > (\pi_{{\rm hom },\ell}(\mathcal{F}) + \varepsilon) \binom{n}{k-\ell} >  \ex_{\ell}(n , \mathcal{G}'(s))$.
Hence $H_n$ contains a copy of $G(s)$ for some $G \in \mathcal{G}'$, which then implies that $H_n$ contians a member of $\mathcal{F}$.
Therefore $\pi_{{\rm hom },\ell}(\mathcal{F}) \ge \pi_{\ell}(\mathcal{F})$.
\end{proof}

We are going to bound $\pi_{\ell}(\mathcal{F})$ from below by $\pi(\mathcal{ L }_{ \ell - 1 }(\mathcal{F}) )$ proving Proposition~\ref{prp:lowerbound}.

\begin{proof}[Proof of Proposition~\ref{prp:lowerbound}]
Let $2 \le \ell < k $ be integers and let $\mathcal{F}$ be a family of $k$-graphs.
Set $\mathcal{ L } = \mathcal{ L }_{ \ell - 1 } ( \mathcal{F})$.
We may assume that $\pi = \pi(\mathcal{ L }) >0$, or else we are done.
Pick a constant $0 < \varepsilon < \pi/3$ and a sufficiently large integer $m \ge \varepsilon^{-1}$ such that there exists an $\mathcal{ L }$-hom-free $(k-\ell+1)$-graph $G_m$ on $[m]$ with $e(G_m) \ge (\pi-\varepsilon) \binom{m}{k-\ell+1}$ by Proposition~\ref{prp:homfree}.

Now we define a $k$-graph $H$ on $[n]$ as follows.
For every $\ell$-subset $S$ of~$[n]$, let $X_S$ be the independent uniform random variable on $[m]$.
The $k$-set $\{i_1, \dots, i_k\}$ is an edge in $H$ with $i_1 < \dots < i_k$ if and only if 
\begin{align*}
\left\{ X_{\{i_1, \dots, i_{ \ell - 1 },i_{ \ell -1+j}\}}: j \in [k-\ell+1]  \right\} & \textrm{ is an edge in $G_m$}. & (\dagger)
\end{align*}

Let $S$ be an $\ell$-subset of~$[n]$.
Suppose that $R$ is a $(k-\ell)$-subset of~$[n] \setminus S$ and $R \cup S = \{i_1, \dots, i_{k}\}$ with $i_1 < \dots < i_k$.
Recall that for an $\ell$-set $S'$, $X_{S'}$ is the independent uniform random variable on $[m]$.
By ($\dagger$), the probability of $R \cup S$ is an edge in $H$ is equal to the probability that a random $(k-\ell+1)$-subset~$S''$ of~$[m]$ forms an edge in $G_m$, where the elements of $S''$ are chosen independently uniformly at random with replacements.
Therefore
\begin{align*}
	\mathbb{P}(R  \in N^H(S) ) 
& \ge \frac{e(G_m)}{\binom{m}{k-\ell+1}} - O(1/m) 
 \ge  \pi-\varepsilon - O(1/m) \ge \pi- 3 \varepsilon /2
\end{align*}
as $m$ is large.
Thus, $\mathbb{E}(\deg^H(S)) \ge \left( \pi- 3 \varepsilon /2\right) \binom{ n - \ell }{k-\ell} \ge \left( \pi- 2 \varepsilon \right) \binom{ n }{k-\ell}$.

We now show that with high probability $\deg^H(S) \ge ( \pi - 3 \varepsilon )  \binom{ n  }{k-\ell}$ by considering the following martingale.
For each $1 \le i \le n$, define $Z_i$ to be the collection of $X_{S'}$ such that $S'$ is an $\ell$-subset of~$[n]$ and the largest element of $S'$ is~$i$.
So $Z_1,\dots,Z_i$ can be viewed as exposing the first $i$ vertices.
Define $Y_i = \mathbb{E}( \deg^H(S) |Z_1, \dots, Z_i)$.
Note that $\{ Y_i: i=0, 1, \dots, n \}$ is a martingale with 
\begin{align*}
Y_0  = \mathbb{E}(\deg^H(S))\ge \left( \pi - 2\varepsilon \right) \binom{ n }{k-\ell}
\end{align*}
and $Y_n = \deg^H(S)$.
Moreover, $|Y_i - Y_{i-1}| \le  \binom{i-1}{k-\ell-1}$.
Thus, by Theorem~\ref{thm:azuma} taking $\lambda = \varepsilon \binom{ n }{k-\ell}$ and $c_i = \binom{i-1}{k-\ell-1} $, we have
\begin{align*}
	& \mathbb{P}\left( \deg^H(S) \le ( \pi - 3 \varepsilon )  \binom{ n  }{k-\ell} \right) 
	 \le \mathbb{P}(Y_n \le Y_0 - \lambda)\\
	& \le  \exp \left( \frac{ -\varepsilon^2 \binom{ n  }{k-\ell}^2}{ 2 \sum_{i=1}^{ n } \left( \binom{i-1}{k-\ell-1}  \right)^2}\right)
	 \le  \exp \left( \frac{ -\varepsilon^2 \binom{n}{k-\ell}^2}{2 n \binom{ n-1}{k-\ell-1}^2}\right) 
	= \exp \left( - \frac{ \varepsilon^2 n}{2(k-\ell)^2} \right).
\end{align*}
Therefore, $\delta_{\ell}(H) \ge \left( \pi- 3\varepsilon \right) \binom{n}{k-\ell}$ with high probability for large $n$.

Next, we show that $H$ is $\mathcal{F}$-free.
Let $F$ be a member of $\mathcal{F}$ say with $|V(F)| = f$.
Let $T = \{i_1, \dots, i_f\}$ be an $f$-subset of~$[n]$ with $i_1 < \dots < i_f$.
By considering the map $\phi(i_j) = X_{\{i_1, \dots, i_{ \ell - 1 },i_{\ell +j}\}}$ for $j \in [k-\ell+1]$, the linked graph $N^H(\{i_1, \dots, i_{ \ell - 1 }\})$ is homomorphic to $G_m$.
Since $G_m$ is $\mathcal{ L }$-hom-free, $N^H(\{i_1, \dots, i_{ \ell - 1 }\})$ is $\mathcal{ L }_{\ell-1}(F)$-free and so $H[T]$ is $F$-free.
Therefore, $H$ is $\mathcal{F}$-free and so $\ex_{\ell}( n, \mathcal{F} ) \ge (\pi - 3 \varepsilon) \binom{n}{k-\ell}$.
Hence, $\pi_{\ell}(\mathcal{F}) \ge \pi$ as required.
\end{proof}

\section{Jumps}

From this section onward, unless stated otherwise we simply say a jump to mean a $\pi_{\ell}^k$-jump, where $k$ and $\ell$ are integers with $k > \ell >1$.
The following proposition shows the equivalent statement for $\alpha$ being a jump.

\begin{prp} \label{prp:jumpequ}
Let $k > \ell >1$ be integers. 
Let $0 \le \alpha <1$ and $ 0 < \delta \le 1-\alpha$.
The following statements are equivalent.
\begin{itemize}
	\item[\rm (S1)] Every family of $k$-graphs $\mathcal{F}$ satisfies $\pi_{\ell}(\mathcal{F}) \notin (\alpha, \alpha + \delta)$.
	\item[\rm (S2)] Every finite family of $k$-graphs $\mathcal{F}$ satisfies $\pi_{\ell}(\mathcal{F}) \notin (\alpha, \alpha + \delta)$.
	\item[\rm (S3)] For every $\varepsilon >0$ and every $M \ge k-1$, there exists an integer $N$ such that, for every $k$-graph $H_n$ with $n \ge N$ and $\delta_{ \ell }(H_n) \ge (\alpha + \varepsilon) \binom{n}{k-\ell}$, we can find a subhypergraph $H_m' \subseteq H_n$ with $\delta_{ \ell }(H_m') \ge (\alpha +\delta - \varepsilon) \binom{m}{k-\ell}$ for some $m \ge M$.
\end{itemize}
\end{prp}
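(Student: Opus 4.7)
The plan is to verify $(S1) \Leftrightarrow (S2)$ and $(S1) \Leftrightarrow (S3)$ separately. The first equivalence is bookkeeping on top of Corollary~\ref{cor:Turansubgraph}, while $(S1) \Leftrightarrow (S3)$ is the substantive part, whose harder direction $(S1) \Rightarrow (S3)$ requires manufacturing a family $\mathcal{F}$ with $\pi_l(\mathcal{F}) \in (\alpha, \alpha + \delta)$ out of a failure of $(S3)$.

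For $(S1) \Leftrightarrow (S2)$, the forward direction is immediate since every finite family is a family. Conversely, if some $\mathcal{F}$ has $\pi_l(\mathcal{F}) = \alpha + \eta$ with $\eta \in (0, \delta)$, I would apply Corollary~\ref{cor:Turansubgraph} with $\epsilon' = (\delta - \eta)/2$ to extract a finite $\mathcal{F}' \subseteq \mathcal{F}$ with $\pi_l(\mathcal{F}') \in [\alpha + \eta, \alpha + (\eta + \delta)/2] \subset (\alpha, \alpha + \delta)$, contradicting $(S2)$.

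The implication $(S3) \Rightarrow (S1)$ is a direct unpacking of definitions: if $\pi_l(\mathcal{F}) \in (\alpha, \alpha + \delta)$, fix $\epsilon > 0$ with $\alpha + \epsilon < \pi_l(\mathcal{F}) < \alpha + \delta - \epsilon$; by Proposition~\ref{prp:existence}, for every $M \ge k-1$ there are arbitrarily large $\mathcal{F}$-free $H_n$ of $l$-degree at least $(\alpha + \epsilon)\binom{n}{k-l}$, and applying $(S3)$ to such an $H_n$ produces an $\mathcal{F}$-free subhypergraph $H'_m$ on $m \ge M$ vertices with $l$-degree at least $(\alpha + \delta - \epsilon)\binom{m}{k-l}$. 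Letting $M \to \infty$ forces $\pi_l(\mathcal{F}) \ge \alpha + \delta - \epsilon$, a contradiction.

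The crux is $(S1) \Rightarrow (S3)$, which I would prove via contrapositive. If $(S3)$ fails, there are $\epsilon > 0$, $M \ge k-1$, and arbitrarily large $H_n$ with $\delta_l(H_n) \ge (\alpha + \epsilon)\binom{n}{k-l}$ containing no subhypergraph $H'_m$ with $m \ge M$ and $\delta_l(H'_m) \ge (\alpha + \delta - \epsilon)\binom{m}{k-l}$. The key move is to take $\mathcal{F}$ to be the (infinite) family of all $k$-graphs $G$ with $|V(G)| \ge M$ and $\delta_l(G) \ge (\alpha + \delta - \epsilon)\binom{|V(G)|}{k-l}$. Each such $H_n$ is $\mathcal{F}$-free by construction, so $\pi_l(\mathcal{F}) \ge \alpha + \epsilon$; on the other hand, any $\mathcal{F}$-free graph on $n \ge M$ vertices cannot itself belong to $\mathcal{F}$ (it contains a copy of itself), forcing its $l$-degree below $(\alpha + \delta - \epsilon)\binom{n}{k-l}$ and hence $\pi_l(\mathcal{F}) \le \alpha + \delta - \epsilon$. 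Together these contradict $(S1)$. I expect the only genuine obstacle to be noticing this choice of family; once it is in hand, the remaining estimates are essentially immediate.
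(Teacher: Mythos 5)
Your proposal is correct, and the skeleton matches the paper's: (S1)$\Rightarrow$(S2) is trivial, (S2)$\Rightarrow$(S1) uses Corollary~\ref{cor:Turansubgraph} exactly as the paper does, and your (S3)$\Rightarrow$(S1) is the same unpacking via Proposition~\ref{prp:existence}. The one place you genuinely diverge is the crux (S1)$\Rightarrow$(S3). The paper, given the witnesses $H_{n_i}$ of the failure of (S3), takes $\mathcal{F}=\{G: G\not\subseteq H_{n_i}\ \text{for all } i\}$, uses (S1) to boost $\pi_l(\mathcal{F})\ge\alpha+\epsilon$ up to $\pi_l(\mathcal{F})\ge\alpha+\delta$, and then derives a contradiction by observing that a large $\mathcal{F}$-free $G_m$ of high minimum $l$-degree must embed into some $H_{n_i}$, violating the degree bound on subgraphs of the witnesses. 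You instead take the degree-defined family $\mathcal{F}=\{G: |V(G)|\ge M,\ \delta_l(G)\ge(\alpha+\delta-\epsilon)\binom{|V(G)|}{k-l}\}$, for which the witnesses give $\pi_l(\mathcal{F})\ge\alpha+\epsilon$ and the tautology that an $\mathcal{F}$-free graph on $n\ge M$ vertices cannot lie in $\mathcal{F}$ gives $\pi_l(\mathcal{F})\le\alpha+\delta-\epsilon$, so $\pi_l(\mathcal{F})$ lands in the forbidden interval and contradicts (S1) directly. Your route is slightly more economical (no re-embedding step, no second use of the witnesses), while the paper's family is the traditional Frankl--R\"odl-style choice. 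One small point worth making explicit: your two bounds are only compatible when $\epsilon\le\delta/2$, so you should note that failure of (S3) for a given $\epsilon$ implies failure for every smaller $\epsilon'$ (the hypothesis on $H_n$ weakens and the required subgraph degree only increases), so one may assume $\epsilon<\delta/2$ without loss of generality; the paper makes the analogous tacit assumption $0<\epsilon<\delta$.
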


\begin{proof}
Trivially, (S1)$\Rightarrow$(S2). 

(S2)$ \Rightarrow $(S1). 
Assume that there exist $0 \le \alpha<1$ and $\delta>0$ such that no finite family of $k$-graphs $\mathcal{F}$ satisfies $\pi_{\ell}(\mathcal{F}) \in (\alpha, \alpha + \delta)$.
Suppose that (S1) does not hold, so there exists a family of $k$-graph $\mathcal{F}$ such that $\pi_{\ell}(\mathcal{F}) \in (\alpha , \alpha + \delta)$.
Let $ \varepsilon =  ( \alpha + \delta - \pi_{\ell}(\mathcal{F}) )/2 $.
By Corollary~\ref{cor:Turansubgraph}, there exists a finite family $\mathcal{F}' \subseteq \mathcal{F}$ with $\alpha < \pi_{\ell}(\mathcal{F}) \le \pi_{\ell}(\mathcal{F}') \le \pi_{\ell}(\mathcal{F}) + \varepsilon < \alpha + \delta$, a contradiction.

(S3)$ \Rightarrow $(S1). 
Suppose that (S3) holds for some $0\le \alpha < 1$ and $\delta>0$.
Assume (S1) does not hold for a family $\mathcal{F}$ of $k$-graphs such that $\pi_{\ell}(\mathcal{F}) = \alpha + b$ for some $0 < b < \delta$.
Set $\varepsilon_0 = \min\{ b/2, (\delta-b)/2\}$.
Then, there exists an integer $m = m(\varepsilon_0)$ such that 
\begin{align}
{\ex_{\ell}(m,\mathcal{F})} \le (\pi_{\ell}(\mathcal{F}) + \varepsilon_0){\binom{m}{k-\ell}}. \label{eqn:S3S1}
\end{align}
Also, there exists an integer $n > N(\varepsilon_0, m)$ such that there exists an $\mathcal{F}$-free $k$-graph $H_n$ with $\delta_{\ell}(H_n) \ge ( \pi_{\ell}(\mathcal{F}) - \varepsilon_0) \binom{n}{k-\ell} \ge (\alpha + \varepsilon_0) \binom{n}{k-\ell}$ and $|V(H_n)| = n$.
By~(S3), there exists a subgraph $H_m' \subseteq H_n$ such that $ \delta_{\ell}(H_m') \ge (\alpha + \delta - \varepsilon_0) \binom{m}{k-\ell} > ( \pi_{\ell}(\mathcal{F}) +  \varepsilon_0) \binom{m}{k-\ell} $.
However, this contradicts \eqref{eqn:S3S1} as $H_m'$ is $\mathcal{F}$-free.

(S1)$ \Rightarrow $(S3).
Suppose that (S1) holds but (S3) does not. 
There exist $0 < \varepsilon < \delta$, $M \ge r-1$ and a sequence of $k$-graphs $H_{n_i}$ such that 
\begin{itemize}
	\item[(a$'$)] $|V(H_{n_i})| = n_i$ and $\delta_{\ell}(H_{n_i}) \ge (\alpha+\varepsilon) \binom{n_i}{k-\ell}$,
	\item[(b$'$)] $\delta_{\ell}(H') < (\alpha + \delta - \varepsilon) \binom{|H'|}{k-\ell}$ for all $H' \subseteq H_{n_i}$ with $|V(H')| \ge M$.
\end{itemize}
Let $\mathcal{F} = \{ G: |V(G)| \ge M $ and $G \not\subseteq  H_{n_i}$ for any $i\}$.
Note that $\mathcal{F}$ is not empty as $K_{M} \notin H_{n_i}$ for all~$i$. 
Note that $\pi_{\ell}(\mathcal{F}) \ge \alpha + \varepsilon$ by~(a$'$) and so $\pi_{\ell}(\mathcal{F}) \ge \alpha + \delta$ by~(S1).
Therefore, for every $m \ge M$, there exists an $\mathcal{F}$-free graph $G_m$ with $\delta_{\ell}(G_m) \ge (\alpha+\delta - \varepsilon) \binom{m}{k-\ell}$.
Since $G_m$ is $\mathcal{F}$-free, $G_m \subseteq H_{n_i}$ for some $i$ otherwise $G_m \in \mathcal{F}$.
This contradicts (b$'$).
\end{proof}

\section{Theorem~\ref{thm:nojump} for $\alpha =0$} \label{sec:0}

Here, we prove $0$ is not a jump.
Using (S3) in Proposition~\ref{prp:jumpequ}, the following statement implies that 0 is not a jump.
For every $\delta>0$, there exist $\varepsilon_0>0$ and $M_0>0$ , such that for every $N \ge k$, there exist $n \ge N$ and a $k$-graph $H_n$ satisfying
\begin{itemize}
	\item[(i)] $\delta_{\ell}(H_n) \ge \varepsilon_0 \binom{n}{k-\ell}$,
	\item[(ii)] $\delta_{\ell}(H'_m) < (\delta - \varepsilon_0) \binom{m}{k-\ell}$ for every subgraph $H'_m \subseteq H_n$ with $m \ge M_0$.
\end{itemize}
Hence, in order to prove Theorem~\ref{thm:nojump} for $\alpha = 0$, it is enough to show that there exist $k$-graphs satisfying (i) and~(ii).

When $\ell = k-1$, our construction is identical to that given by Mubayi and Zhao~\cite{MR2337241}.
To illustrate the key idea behind the construction, we consider the following simple case when $k=3$ and $\ell =2$.
Let $H$ be a $3$-graph on $n = pt$ vertices with vertex set $V = V_0 \cup \dots \cup V_{t-1}$ with $V_i \cap V_j = \emptyset$ for $i \ne j$ and $|V_i| = p$ for all $i$.
A $3$-set $S$ is an edge in $H$ if and only if either $|S \cap V_i| \le 1$ for all $i$ or $|S \cap V_i| = 2$ and $|S \cap V_{i+1}| = 1$ for some~$i$.
It is easy to check that $\delta_2 (H)  = n/t$.
Let $S$ be a subset of $V(H)$ with $|S| \ge t+1$.
If $| S \cap V_i | \ge 2$ for all~$i$, then $\delta_2 (H[S]) = \min \{|S \cap V_i| : i \in [n]\} \le |S|/t$.
Otherwise, $\delta_2 (H[S])\le 1$.
So (i) and (ii) hold by setting $\delta = 3/t$ and $\varepsilon_0 = 1/t$.

For general $k > \ell > 1$, we consider the $k$-graph $B(p,t,k,\ell)$ defined below.
\begin{dfn} \label{dfn:B}
For integers $p \ge 1$ and $t \ge k > \ell > 1$, define $B = B(p,t,k,\ell)$ to be the $k$-graph $(V,E)$ with the following properties:
\begin{enumerate}[\rm (a)]
	\item $V = V_0 \cup  \dots \cup  V_{t-1}$ with $V_i \cap V_j = \emptyset$ for $i \ne j$ and $|V_i| = p$ for all $i$.
	\item $E = E_1 \cup E_2$.
	\item $E_1 = \{ S \in \binom{V}{k} : |S \cap V_i| < \ell $ for all $i \}$.
	\item $E_2 = \{ S \in \binom{V}{k} : $ there exists an integer $i$ such that $|S \cap V_i| = \ell $ and $|S \cap V_{i+j}| = 1$ for each $j \le k-\ell \}$, (here $V_t = V_0$).
\end{enumerate}
\end{dfn}

Clearly, $|B| = tp$.
We now show that $\delta_{\ell}(B) = p^{k-\ell}$.

\begin{fact} \label{fact:B}
For integers $p \ge 1$ and $t \ge k > \ell > 1$, $\delta_{\ell}(B(p,t,k,\ell)) = p^{k-\ell}$.
\end{fact}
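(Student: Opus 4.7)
The plan is to show $\deg^B(T) \ge p^{k-l}$ for every $l$-set $T$ and to exhibit equality for at least one such $T$. I split into cases depending on whether $T$ is contained in a single part $V_i$.

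First, suppose $T \subseteq V_i$ for some $i$, so $|T \cap V_i| = l$. Then for every $(k-l)$-set $R$ disjoint from $T$, we have $|(T \cup R) \cap V_i| \ge l$, which rules out membership in $E_1$. To use $E_2$, I would argue that the anchor index $i'$ must equal $i$: if $i' \ne i$, then $V_i$ is either one of the ``singleton'' classes $V_{i'+1},\dots,V_{i'+k-l}$ (forcing $|(T \cup R) \cap V_i| = 1$, contradicting $l \ge 2$) or disjoint from $S$ (contradicting $T \subseteq V_i$). With $i' = i$ forced, $R$ must contain no vertex of $V_i$ and exactly one vertex in each of $V_{i+1},\dots,V_{i+k-l}$; these $k-l$ classes are distinct from $V_i$ since $k-l < t$. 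Counting gives exactly $p^{k-l}$ choices, so $\deg^B(T) = p^{k-l}$.

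Now suppose $T$ is not contained in any single $V_i$. Let $I = \{j : T \cap V_j \ne \emptyset\}$ and $t_j = |T \cap V_j|$; then $2 \le |I| \le l$, and $t_j \le l-1$ for every $j \in I$ (if some $t_j = l$, then $T \subseteq V_j$). I would produce edges in $E_1$ directly: pick any $(k-l)$-element index set $J \subseteq [t] \setminus I$, which is possible because $|[t] \setminus I| = t - |I| \ge k - l$ (using $t \ge k$ and $|I| \le l$), and let $R$ consist of one vertex from each $V_j$ with $j \in J$. For every $j$ one has $|(T \cup R) \cap V_j| \le \max(l-1, 1) \le l-1$, using $l \ge 2$ for the classes with $j \in J$. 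Hence $T \cup R \in E_1$, and the count of such $R$ is $\binom{t - |I|}{k-l} p^{k-l} \ge p^{k-l}$.

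Combining the two cases gives $\delta_l(B) \ge p^{k-l}$ with equality attained by any $T \subseteq V_i$, so $\delta_l(B) = p^{k-l}$. There is no real obstacle here; the only thing to watch is that the anchor uniqueness argument in Case~1 genuinely uses $l \ge 2$ (to distinguish the anchor class from the singleton classes), and that the disjointness of the $k-l$ classes in the cyclic shift is guaranteed by $t \ge k$.
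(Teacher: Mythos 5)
Your proof is correct and takes essentially the same route as the paper: for an $l$-set contained in a single part $V_i$ the degree is computed exactly as $p^{k-l}$ via the $E_2$-edges anchored at $i$, while an $l$-set meeting several parts gets degree at least $p^{k-l}$ from $E_1$-edges, using $t \ge k$ to find $k-l$ untouched parts. Your explicit anchor-uniqueness check (which uses $l \ge 2$) just spells out a detail the paper's one-line argument leaves implicit.
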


\begin{proof}
Let $T$ be an $\ell$-subset of~$V$.
If $T \subseteq V_{i_0}$ for some $i_0$, then $\deg(T) = p^{k-\ell}$ by~(d).
If $|T \cap V_{i}|< \ell $ for all $i$, then $\deg(T) \ge p^{k-\ell}$ by (c) and picking one element in each $V_i$ with $V_i \cap T = \emptyset$ as $t \ge k$.
\end{proof}

\begin{proof}[Proof of Theorem~\ref{thm:nojump} for $\alpha =0$]
Let $k > \ell >1$ and $0 < \delta < \left( 4 k^{-1}(k-\ell)^2 \right)^{k-\ell}$.
Define $\varepsilon_0$, $t$ and $M_0$ to be constants such that 
\begin{align}
\varepsilon_0^{k-\ell}  & =  \frac{\delta}{1+(4(k-\ell)^2)^{k-\ell}} \le  k^{-(k-\ell)}  < 1/2, \label{eqn:e_0}\\
 t & = \left\lfloor \frac{1}{\varepsilon_0}\right\rfloor \ge k,  \label{eqn:t}\\
 M_0 &=  \ell t \ge \frac{ \ell }{2 (k-\ell)\varepsilon_0}. \label{eqn:M_0}
\end{align}
For $p \ge k-1$, set $n = pt$ and let $H_n = B(p,t,k,\ell)$ be the $k$-graph as defined in Definition~\ref{dfn:B}.
By Fact~\ref{fact:B}, $\delta_{\ell}(H_n) = p^{k-\ell}$, so
\begin{align}
	\frac{ \delta_{\ell} ( H_n ) }{\binom{n}{k-\ell}} = \frac{p^{k-\ell}}{\binom{pt}{k-\ell}} \ge t^{-(k-\ell)} \ge \varepsilon_0^{k-\ell}. \nonumber
\end{align}
To complete the proof, our task is to show that (ii) holds, that is, for every $m$-subset~$S$ of~$ V$ with $m >M_0$, $\delta_{\ell}( H_n [S] ) \le (\delta - \varepsilon_0^{k-\ell}) \binom{m}{k-\ell}$.

Let $S$ be an $m$-subset of~$V$ with $m >M_0$ and let $H' = H_n[S]$.
Suppose that $\delta_{\ell}(H') > (\delta - \varepsilon_0^{k-\ell}) \binom{m}{k-\ell}$.
Set $S_i = S \cap V_i$ for all $i$.
Since $m > M_0 = \ell t$, there exists an integer $i_0$ and an $\ell$-subset~$R_0$ of~$S_{i_0}$.
By~\eqref{eqn:e_0}, we have
\begin{align*}
 \deg^{H'}(R) & \ge \delta_{\ell}(H') > (\delta - \varepsilon_0^{k-\ell}) \binom{m}{k-\ell} \\ 
&  = \left(4(k-\ell)^2 \varepsilon_0 \right)^{k-\ell}  \binom{m}{k-\ell} \ge \left( 2 (k-\ell)\varepsilon_0 m \right)^{k-\ell},
\end{align*}
where we take $\binom{m}{k-\ell} \ge \left(\frac{m}{2(k-\ell)} \right)^{k-\ell}$.
Recall Definition~\ref{dfn:B}(d) that $N^{H_n}(R) \subseteq V_{i_0+1} \times \dots \times V_{i_0+k-\ell}$.
Hence, there exists an integer $j \le k-\ell$ such that $|S_{i_0+j}| \ge 2 (k-\ell)\varepsilon_0 m $.
Since $2 (k-\ell)\varepsilon_0 m \ge 2(k-\ell) \varepsilon_0 M_0 \ge \ell $ by~\eqref{eqn:M_0}, there is an $\ell$-subset~$R_{i_0+j}$ of~$S_{i_0+j}$.
By repeating this argument to $i_0+j$, we may conclude that there are at least $t/(k-\ell)$ integers $0 \le i \le  t-1$ such that $|S_i| \ge 2 (k-\ell)\varepsilon_0 m$.
Therefore, by \eqref{eqn:t}
\begin{align*}
	m \ge \frac{t}{k-\ell} \times 2 (k-\ell)\varepsilon_0 m
	\ge 2 \left( \frac1{\varepsilon_0} - 1\right) \varepsilon_0 m > m
\end{align*}
as $\varepsilon_0 < 1/2$ by~\eqref{eqn:e_0}.
This is a contradiction, so $\delta_{\ell}(H_n[S]) < (\delta - \varepsilon_0^{k-\ell}) \binom{m}{k-\ell}$ for all $m$-subset~$S$ of~$V$ and all $m > M_0$.
Therefore, $0$ is not a $\pi_{\ell}^k$-jump for all $k > \ell >1$.
\end{proof}

\section{Proof of Theorem~\ref{thm:nojump} for $0< \alpha <1$} \label{sec:nojump}

As in Section~\ref{sec:0}, we are going to construct $k$-graphs $H_n$ that contradict (S3) in Proposition~\ref{prp:jumpequ}.
The structure of $H_n$ is based on $B = B(p,t,k,\ell)$.
(The formal definition of $H_n$ will be given later.)
So $\delta_{\ell}(H_n) = \deg^{H_n}(T)$ for some $\ell$-subset~$T$ of~$V_0$.
In order to ensure that $\deg^{H_n}(T) \ge \alpha \binom{n}{k - \ell}$,  we modify Definition~\ref{dfn:B}(d) so that $N^{H_n}(T)$ contains all $(k-\ell)$-subsets~$S$ of~$V_1 \cup \dots \cup V_a$ with $|S \cap V_i |< \ell$ for all $i \in [a]$.

The following function will turn out to be very useful when calculating $\delta_{\ell}(H_n)$.
For integers $a$ and $k > \ell >1$, let $\mathcal{A} = \mathcal{A}(a,k,\ell)$ be the set of all ordered $a$-tuples $(q_1, \dots, q_a)$ in $\{0,1 \dots,\ell-1\}^a$ such that $\sum q_i = k-\ell$.
Given integers $n_1, \dots, n_a$, define the function 
\begin{align*}
f(n_1, \dots, n_a) = \sum_{(q_1, \dots, q_a) \in \mathcal{A}} \prod_{i=1}^a \binom{n_i}{q_i}.
\end{align*}
Thus, $f(n_1, \dots, n_a)$ is the number of ways of choosing $k-\ell$ elements from vertex sets $U_1$, \dots, $U_a$ with $|U_i| = n_i$ such that at most $ \ell -1 $ elements are chosen from each~$U_i$.
Write $f(n_0;a)$ for $f(n_1, \dots, n_a)$ if $n_i = n_0$ for all $i \in [a]$.
Note that 
\begin{align*}
f(n_1, \dots, n_a) \le \binom{\sum n_i}{k-\ell}.
\end{align*}
Moreover, from the definition of~$f$, for integers $a,b\ge 1$ with $a/b < 1$ we also have
\begin{align*}
f(n_0/b;a) 
& \ge  \binom{\frac{an_0}{b}}{k-\ell} - a \binom{\frac{n_0}{b}}{ \ell }\binom{\frac{a n_0}{b}- \ell }{k-2\ell} \\
& \ge \binom{\frac{an_0}{b}}{k-\ell} - \frac{a^{k-2 \ell+1}}{b^{k-\ell}} \binom{n_0}{ \ell }\binom{n_0- \ell}{k-2\ell} \\
& = \binom{\frac{an_0}{b}}{k-\ell} - \left(\frac{a}{b} \right)^{k-\ell} a^{-(\ell - 1)} \binom{k-\ell}{ \ell } \binom{n_0}{k-\ell}.
\end{align*}
Thus, given integers $k> \ell >1$, a rational $q \in (0,1)$ and a constant $\varepsilon >0$ there exist integers $0 < a < b < N_0$ such that $a /b = q$, $a+k < b$ and for all $n' \ge N_0$,
\begin{align}
	 q^{k-\ell} - \varepsilon 
= \left(\frac{a}{b}\right)^{k-\ell} - \varepsilon 
\le \frac{f(n'/b;a)}{\binom{n'}{k-\ell}} 
\le  \frac{ \binom{a n' / b}{k-\ell} }{ \binom{n'}{k-\ell} }
\le \left(\frac{a}{b}\right)^{k-\ell} 
= q^{k-\ell}. \nonumber
\end{align}

We now give the proof of Theorem~\ref{thm:nojump} when $0<\alpha <1$, that is, no $\alpha \in (0,1)$ is a jump.

\begin{proof}[Proof of Theorem~\ref{thm:nojump} for $0< \alpha <1$]
Note that the set of $q^{k-\ell}$ for rationals $q \in (0,1)$ is dense in $(0,1)$.
Hence, it is enough to show that $q^{k-\ell}$ is not a jump for all rationals $q \in (0,1)$.
Given $\delta>0$, let $\varepsilon_0$, $t$ and $M_0$ as in \eqref{eqn:e_0}, \eqref{eqn:t} and \eqref{eqn:M_0} respectively.
Pick $0<a<b<N_0$ such that
\begin{align}
	a/b = q, \qquad	
a+k < b, \qquad 
q^{k-\ell} - \varepsilon_0^{k-\ell} \le \frac{f(n'/b;a)}{\binom{n'}{k-\ell}}  \le q^{k-\ell}
	\label{eqn:f(n/b;a)lowerbound}
\end{align}
for all $n' > N_0$.
Set $\varepsilon =  (\varepsilon_0/ b )^{k-\ell}/4$.
Define $M$ to be an integer such that
\begin{align}
M  \ge  \max\left\{ 
2(k-\ell) \left( \frac{\binom{M_0}{k-\ell} }{\delta - \varepsilon_0^{k-\ell}} \right)^{\frac{1}{k-\ell}}
, \ 
\frac{ b \ell (k-\ell)}{q \ln \left(1+ \frac12 \left(\frac{\varepsilon_0}{q}\right)^{k-\ell} \right) }, \ 
N_0,
\
\frac{  b \ell }{1-q}
\right\} .
\label{eqn:eandM}
\end{align}
For integers $p >k$, set $n = btp$.
Let $H = (V,E)$ be the $k$-graph with the following properties:
\begin{enumerate}[(a$'$)]
	\item $V = V_0 \cup V_1 \cup  \dots \cup  V_{b-1}$ with $V_i \cap V_j = \emptyset$ for $i \ne j$ and $|V_i| = tp$ for all $i$.
	\item $E = E_1 \cup E_2 \cup \bigcup_{j=0}^{b-1}E'_j$.
	\item $E_1 = \{ S \in \binom{V}{k} : |S \cap V_i| < \ell $ for all $i \}$.
	\item $E_2$ is the set of $k$-subsets~$S$ of~$V$ such that 	
\begin{align*}
	|S \cap V_i|
	\begin{cases}
	= \ell & \textrm{if $i = i_0$,}\\
	< \ell & \textrm{if $i_0+1 \le i \le  i_0+a$,}\\
	= 0 & \textrm{otherwise,}
	\end{cases}
\end{align*}
	for some $i_0$, where we take $V_{b+j} = V_j$.
	\item $E'_i = E(H[V_i])$, which is isomorphic to $B(p,t,k,\ell)$, for all $i$.	
\end{enumerate}
Note that the definition of the edge set $E_2$ is different to the one defined in $B(p,t,k,\ell)$.
Pick $p$ large such that $n \ge M$.
First, we are going to show that 
\begin{align*}
\delta_{\ell}(H)  = f(n/b;a) + p^{k-\ell}.
\end{align*}
Let $T$ be an $\ell$-subset of~$V$.
If $T \subseteq V_{i_0}$ for some $i_0$, then Fact~\ref{fact:B} implies that $\deg^{H[V_{i_0}]}(T) \ge p^{k-\ell}$ with equality holds for some $\ell$-subset $T$ of $V_{i_0}$. 
By (d$'$), the number of $(k-\ell)$-subsets $S$ of~$N^H(T) \setminus \binom{V_{i_0}}{k-\ell}$ is $f(n/b;a)$.
Thus, $\deg^H (T) \ge f(n/b;a) +p^{k-\ell}$ with equality holds for some $\ell$-subset $T$ of $V_{i_0}$.
If $|T \cap V_{i}|< \ell $ for all $i$, then $\deg^H (T)  \ge f(n/b;b-\ell) \ge f(n/b;a)+p^{k-\ell}$ by (c$'$) and picking at most $ \ell -1 $ elements in each $V_i$ with $V_i \cap T = \emptyset$.
Hence, $\delta_{\ell}(H)  = f(n/b;a) + p^{k-\ell}$ as claimed.
Note by~\eqref{eqn:t} and the definition of $\varepsilon$ that 
$$p^{k-\ell} = \left( \frac{n}{bt} \right)^{k-\ell} \ge \left( \frac{\varepsilon_0}{b} \right)^{k-\ell} \binom{n}{k-\ell} > 2 \varepsilon \binom{n}{k-\ell}.$$
Recall that $n \ge M$, so we have by~\eqref{eqn:f(n/b;a)lowerbound}
\begin{align*}
\delta_{\ell}(H) & = f(n/b;a) +p^{k-\ell} \\
 & >  \left( q^{k-\ell} - \varepsilon \right)\binom{n}{k-\ell} + 
2\varepsilon \binom{n}{k-\ell} = 
\left( q^{k-\ell} + \varepsilon \right)\binom{n}{k-\ell}.
\end{align*}
By Proposition~\ref{prp:jumpequ}(S3), to complete the proof, our task is to show that for every $m$-subset~$S$ of~$V$ with $m >M$, $\delta_{\ell}(H[S]) \le (q+\delta - \varepsilon_0^{k-\ell}) \binom{m}{k-\ell}$.
Let $S$ be an $m$-subset of~$V$ with $m >M$ and let $H' = H[S]$.
Suppose that $\delta_{\ell}(H') > (q+\delta - \varepsilon_0^{k-\ell}) \binom{m}{k-\ell}$.
Set $S_i = S \cap V_i$ and $m_i = |S_i|$ for all $i$.

\begin{clm} \label{clm:i_0}
There exists $i_0$ such that $m_{i_0} \ge \ell $ and $\sum_{j=i_0+1}^{i_0+a} m_j < q m + b \ell $.
\end{clm}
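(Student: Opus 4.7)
I would prove the Claim by contradiction: assume that every $i \in L := \{i \in \mathbb{Z}/b\mathbb{Z} : m_i \ge l\}$ satisfies $T_i := \sum_{j=i+1}^{i+a} m_j \ge qm + bl$, and aim for a contradiction. The set $L$ is nonempty because $m > M \ge bl/(1-q)$ forces the average $m/b$ to exceed $l$.

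The first move is a dichotomy on the structure of $L$ on the cycle $\mathbb{Z}/b\mathbb{Z}$. If some $i_0 \in L$ has $(i_0, i_0+a] \cap L = \emptyset$, then every $m_j$ in that window is at most $l-1$, so $T_{i_0} \le a(l-1) < bl \le qm + bl$, contradicting the assumption. Hence I may assume that the maximum gap between consecutive elements of $L$ is at most $a$, so in particular $r := |L| \ge \lceil b/a \rceil$ and every length-$a$ window on the cycle meets $L$; the latter observation upgrades the trivial inequality $T_i \ge 0$ to $T_i \ge l$ for every $i$, not just $i \in L$.

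Using these pieces, the global identity $\sum_i T_i = am$ combined with $T_i \ge qm + bl$ on $L$ and $T_i \ge l$ on its complement gives
\[
am \ge r(qm+bl) + (b-r)l, \quad \text{so} \quad r \le \frac{am - bl}{qm + l(b-1)},
\]
and matching this against $r \ge \lceil b/a \rceil$ already closes the case when $m$ lies below an explicit threshold depending only on $a,b,l$. To cover the remaining (larger) values of $m$ I would sharpen the per-index bound: since $T_i \le m - m_i$, the assumption forces $m_i \le (1-q)m - bl$ for every $i \in L$, so $\sigma := \sum_{i\in L} m_i \le r\bigl((1-q)m - bl\bigr)$. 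Pairing this with the complementary bound $\sigma \ge m - (b-r)(l-1)$, and combining with the double-counting identity $\sum_{i \in L} T_i = \sum_j m_j n_j$ with $n_j := |L \cap [j-a, j-1]|$ (so $n_j \ge 1$ on $L$ in the dense case), should produce the refined inequality
\[
\sum_{i\in L} T_i \le N_{LL}\bigl((1-q)m - bl\bigr) + (l-1)(ar - N_{LL}), \quad N_{LL} := \sum_{j\in L} n_j \in [r, ar],
\]
and comparing this against $\sum_{i\in L} T_i \ge r(qm+bl)$ closes the remaining window.

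The main obstacle is this last step: naive averaging rules out only a bounded range of $m$, so one must lean on the integer ceiling $\lceil b/a \rceil$ (which strictly exceeds $b/a$ whenever $a \nmid b$, and is effective thanks to the hypothesis $b > a + k$) together with the precise calibration of $M$ in \eqref{eqn:eandM}. I expect the bulk of the work to be the algebraic verification that the threshold $m > M$ really does suffice across every regime, using the particular terms $bl/(1-q)$ and $bl(k-l)/\bigl(q\ln(1+\tfrac12(\epsilon_0/q)^{k-l})\bigr)$ from \eqref{eqn:eandM}.
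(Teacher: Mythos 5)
There is a genuine gap, and it sits exactly where you flagged it: the final, ``large $m$'' regime. Your contradiction hypothesis is only exploited through aggregated quantities ($r=|L|$, $\sigma$, $N_{LL}$, $\sum_i T_i=am$), and the resulting system of inequalities is simply \emph{consistent} when $m$ is large, so no contradiction can follow from it. Concretely, your refined inequality $r(qm+bl)\le N_{LL}\bigl((1-q)m-bl\bigr)+(l-1)(ar-N_{LL})$ is satisfied, e.g., with $N_{LL}=ar$: it reduces to $m\cdot a\frac{b-a-1}{b}\ge (a+1)bl$, which holds for all large $m$ because $b>a+k$ forces $b-a-1>0$. Likewise your first averaging bound gives $r\le\frac{am-bl}{qm+(b-1)l}$, which tends to $b$ as $m\to\infty$ and so stops conflicting with $r\ge\lceil b/a\rceil$ once $m\gtrsim b^2l/a^2$, and the bound via $\sigma$ only yields a \emph{lower} bound $r\gtrsim 1/(1-q)$, again compatible with everything else. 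A numerical check ($k=3$, $l=2$, $a=2$, $b=6$, $r=3$, $N_{LL}\in\{2,\dots,6\}$, $m$ large) shows all of your inequalities hold simultaneously. The slack grows linearly in $m$, so neither the integrality of $\lceil b/a\rceil$ nor the calibration of $M$ in \eqref{eqn:eandM} can rescue the argument; indeed the terms $bl(k-l)/\bigl(q\ln(\cdot)\bigr)$ and the first entry of $M$ are used elsewhere in the proof of the theorem (for the degree estimates), not for this claim, whose proof in the paper needs only $m> lb$ (together with $bl/(1-q)\ge bl$).

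The missing idea is positional rather than quantitative. The paper's proof: since $\sum_i T_i=am$ and there are $b$ cyclic windows, some window has $T_i\le qm$; relabel so that $\sum_{j=1}^a m_j\le qm$. If $m_0\ge l$ you are done. Otherwise take $i_0$ to be the \emph{largest} index with $m_{i_0}\ge l$ (it exists because $m>lb$). All indices strictly between $i_0$ and $b$ are light, as is index $0$, so the window after $i_0$ is covered by $\{i_0+1,\dots,b-1\}\cup\{0\}\cup\{1,\dots,a\}$ and hence $T_{i_0}\le (b-1)(l-1)+qm<qm+bl$. Your opening dichotomy (a heavy index followed by $a$ light indices) is correct and disposes of the sparse case, but in the dense case your global double counting discards precisely the adjacency information (``last heavy index sits next to a run of light indices that abuts a below-average window'') that makes the claim true; without reinstating some such local argument the proposal cannot be completed.
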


\begin{proof}[Proof of claim]
If $\sum_{j=i+1}^{i+a} m_j > qm$ for all $i$, then by averaging we get $|S| > qm b/a  = m$, a contradiction.
Hence, we may assume without loss of generality that $\sum_{j=1}^{a} m_j \le qm$ and $m_0 < \ell $.
Let $i_0<b$ be the largest integer such that $m_{i_0} \ge \ell $.
Note that such $i_0$ exists as $|S| = m \ge M > b \ell$.
Then 
\begin{align*}
	\sum_{j = i_0+1}^{i_0+a} m_j \le \sum_{j = i_0+1}^{b-1} m_j + |S_0| + \sum_{j = 1}^{a} m_j \le (b-1)(\ell - 1) + qm.
\end{align*}
Hence, the claim holds.
\end{proof}

Without loss of generality, we may assume that Claim~\ref{clm:i_0} holds with $i_0 = 0$.
Let $R$ be an $\ell$-subset of~$S_0$.
If $|S_0| \ge M_0$, then 
\begin{align*}
\deg^{H[S_0]}(R) < ( \delta - \varepsilon_0^{k-\ell}) \binom{|S_0|}{k-\ell}  \le ( \delta - \varepsilon_0^{k-\ell}) \binom{m}{k-\ell}
\end{align*}
 as $H[V_0]$ is isomorphic to $B(p,t,k,\ell)$ by~(e$'$).
Therefore
\begin{align*}
\deg^{H[S_0]}(R)  &  \le \max \left\{ ( \delta - \varepsilon_0^{k-\ell}) \binom{m}{k-\ell}, \binom{M_0}{k-\ell} \right\}.
\end{align*}	
By~\eqref{eqn:eandM}, $( \delta - \varepsilon_0^{k-\ell}) \binom{m}{k-\ell} \ge  ( \delta - \varepsilon_0^{k-\ell}) \left(\frac{m}{2(k-\ell)}\right)^{k-\ell} \ge \binom{M_0}{k-\ell}$.
In summary, 
\begin{align}
\deg^{H[S_0]}(R) < ( \delta - \varepsilon_0^{k-\ell}) \binom{m}{k-\ell}. \label{eqn:degS0}
\end{align}
Recall (d$'$) that if $T \in N^H(R)$ with $T \not \subseteq S_0$, then $T \subseteq S_1 \cup \dots \cup S_a$ and $|T \cap S_i |< \ell $ for all $i \in [a]$.
Since $|S_j| = m_j$ and $\sum_{j=1}^{a} m_j < \frac{am}{b} + b \ell $, the number of $S \in N^{H'}(R) \setminus \binom{V_0}{k-\ell}$ is equal to 
\begin{align*}
 f(m_1, \dots, m_a) \le \binom{\sum_{j=1}^{a} m_j}{k-\ell}  < \binom{am/b+  b \ell }{k-\ell} \le \left( q + \frac{  b \ell }m \right)^{k-\ell} \binom{m}{k-\ell}
\end{align*}
as $q + b \ell /m \le q + b \ell /M \le 1$.
Therefore, together with~\eqref{eqn:degS0} and \eqref{eqn:eandM}, 
\begin{align*}
	\deg^{H'}(R) & \le \deg^{H[S_0]}(R) + \left| N^{H'}(R) \setminus \binom{S_0}{k-\ell} \right|
\\
 &\le  \left( \delta - \varepsilon_0^{k-\ell} + \left(q + \frac{  b \ell }m \right)^{k-\ell} \right) \binom{m}{k-\ell}\\
		& \le\left(  \delta - \varepsilon_0^{k-\ell} +   q^{k-\ell}  \exp\left( \frac{ b \ell (k-\ell)}{qM} \right) \right) \binom{m}{k-\ell} \\
		& \le\left( \delta - \varepsilon_0^{k-\ell} +   q^{k-\ell} \left( 1+ \frac12 \left( \frac{\varepsilon_0}{q} \right)^{k-\ell}  \right) \right) \binom{m}{k-\ell} \\
& = \left(q^{k-\ell} +\delta - \frac12 \varepsilon_0^{k-\ell}   \right) \binom{m}{k-\ell} \le \left(q^{k-\ell} +\delta - \varepsilon   \right) \binom{m}{k-\ell} .
\end{align*}
Therefore, $\delta_{\ell}(H[S]) <\left( q^{k-\ell} + \delta - \varepsilon \right) \binom{m}{k-\ell}$ for all $m$-subsets~$S$ of~$V$ and all $m > M$.
Hence, $q^{k-\ell}$ is not a $\pi_{\ell}^k$-jump.
\end{proof}

\section{Acknowledgement}
The authors would like to thank the anonymous referee for pointing out an error in the previous version of the manuscript.

\end{document}